\lstdefinelanguage{Macaulay2}{
  morekeywords={apply, degree, gens, ideal, ring, QQ, x_0, x_1, x_2, x_3, x_4, x_5, x_6, x_7, x_8, x_9, x_10, x_11, x_12, x_13, x_14, x_15, x_16, x_17, x_18, x_19, x_20, x_21, x_22, x_23, x_24, x_25, x_26, x_27, x_28, x_29, x_30, x_31, x_32, x_33, x_34, x_35, x_36, x_37, x_38, x_39, x_40, x_41, x_42, x_43, x_44, x_45, x_46, x_47, x_48},
  basicstyle=\small\ttfamily,
  commentstyle=\color{gray}\itshape,
  keywordstyle=\color{blue},
  numbers=left,
  numberstyle=\tiny\color{gray},
  stepnumber=1,
  frame=single,
  numbersep=5pt,
  breaklines=true,
  breakatwhitespace=true,
  captionpos=b,
  stringstyle=\color{red},
  tabsize=2,
  showspaces=false,
  showtabs=false,
  title=\lstname,
  rulecolor=\color{black},
  showstringspaces=false
}
\newcounter{savefootnote}   
\newcounter{symfootnote}
\newcommand{\symfootnote}[1]{%
   \setcounter{savefootnote}{\value{footnote}}%
   \setcounter{footnote}{\value{symfootnote}}%
   \ifnum\value{footnote}>8\setcounter{footnote}{0}\fi%
   \let\oldthefootnote=\thefootnote%
   \renewcommand{\thefootnote}{\fnsymbol{footnote}}%
   \footnote{#1}%
   \let\thefootnote=\oldthefootnote%
   \setcounter{symfootnote}{\value{footnote}}%
   \setcounter{footnote}{\value{savefootnote}}%
}
\theoremstyle{plain}
\theoremstyle{plain}
\newtheorem{thm}{Theorem}[section]
\theoremstyle{definition}
\newtheorem{defn}[thm]{Definition} 
\newtheorem{exmp}[thm]{Example}
\newtheorem*{remark}{Remark}
\theoremstyle{corollary}
\newtheorem{cor}[thm]{Corollary}
\theoremstyle{lemma}
\newtheorem{lem}[thm]{Lemma}
\theoremstyle{proposition}
\newtheorem{prop}[thm]{Proposition}
\title{Segre Class of Schemes with Regularly Embedded Components}
\author{Guanxi Li}
\date{\today}
\begin{document}
\begin{abstract}
We generalize Fulton's Residual Intersection Theorem for the Segre class and express the Segre classes of schemes with regularly embedded components in terms of the Chern classes of the normal bundles to the components and their intersections. More specifically, we provide formulas for the following situations: when the components of the scheme intersect transversely and when the ideal sheaf of the scheme, after the blowup along a component, is the product of the ideal sheaves of the exceptional divisor and the residual scheme.  
\end{abstract}

\maketitle

\section{Introduction}
\hspace{8mm}Let $X$ be a closed subscheme of a scheme $Z$ with an ideal sheaf $\mathscr{F}$. \newline Let $C_XZ:= Spec(\sum_{n=0}^{\infty} \mathscr{F}^n / \mathscr{F}^{n+1})$ denote the normal cone to $X$ in $Z$. Consider its projective completion $P(C_XZ\oplus 1)$. Let $\mathscr{O}(1)$ be the dual to tautological line bundle, and let $q:P(C_{X}Z\oplus 1) \rightarrow X$ be the projection. The Segre class of $X$ is then defined as:
$$s(X,Z) := q_*(\sum_{i\geq 0} c_1(\mathscr{O}(1))^i\cap [P(C_XZ \oplus 1)]) $$
\cite[4.2]{Ful84}.
\newline
The Segre class of a vector bundle $E$ on $X$ is defined as follows:
$$s(E) = c(E)^{-1},$$
where $c(\cdot)$ denotes the Chern class, and we write $s(E) = s_1(E)+s_2(E) +s_3(E)\cdots $, where $s_i(E): A_{*}(X)\rightarrow A_{*-i}(X)$.
In the case when $X$ is regularly embedded in $Z$, $N_XY:= C_XY$ denotes the normal bundle to $X$. The Segre class is then expressed as:
$$s(X,Z) = c(N_XZ)^{-1}\cap [X] = s(N_{X}Z)\cap [X].$$

A problem of interest is that of residual intersection. Suppose $X\hookrightarrow Z$ is regularly embedded and $V\subseteq Z$ is a purely $k$-dimensional subscheme. Additionally, let $W = V\cap X.$ Then the intersection product is
$$X\cdot V = \{c(N_XY)\cap s(W, V)\}_{k-d},$$
where $d= codim_Z(X)$ (\cite{Ful84}, 6.1). Here, the subscript of the bracket is interpreted as taking the dimension $(k-d)$-term of the class.

Furthermore, if $U\subseteq W$ is a closed subscheme, the problem of residual intersection involves expressing $X\cdot V$ as a class on $U$ with a residual class $\mathbb{R}$:
$$X\cdot V = \{c(N_{X}Y)\cap s(U,V)\}_{k-d} + \mathbb{R}.$$

At the level of Segre classes, this problem is equivalent to computing $s(W,Z)$ in terms of the sum of $s(U,Z)$ and some residual terms. 
In the case when $U$ is a Cartier divisor, we have Fulton's residual intersection formula:
\begin{prop}[\cite{Ful84} Proposition 9.2]
\label{fultonresidual}
Let $D\subseteq W\subseteq Z$ be closed embeddings, with $Z$ a $k$-dimension variety, and $D$ a Cartier divisor. Let $R$ be a subscheme in $W$ such that $W = D\cup R$ set theoretically and $\mathscr{W} = \mathscr{D}\cdot \mathscr{R}$, where $\mathscr{W}, \mathscr{D}, \mathscr{R}$ are ideal sheaves of $W,D, R$. Then,
$$s(W,Z)_m = s(D,Z)_m+ \sum_{j=0}^{k-m}\binom{k-m}{j}(-D)^j\cdot s(R,Z)_{m+j}$$
in $A_m(W)$. 
\end{prop}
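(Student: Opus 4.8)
The plan is to compare $s(W,Z)$, $s(R,Z)$ and $s(D,Z)$ through one and the same blow-up of $Z$. Since $\mathscr{D}=\mathscr{O}_Z(-D)$ is invertible and $\mathscr{W}=\mathscr{D}\cdot\mathscr{R}$, on $Bl_R Z$ the sheaf $\mathscr{W}\cdot\mathscr{O}$ is invertible, and on $Bl_W Z$ the sheaf $\mathscr{R}\cdot\mathscr{O}$ is invertible; by the universal property of blowing up these two $Z$-schemes are canonically identified over $Z$. Let $\eta:\widetilde Z\to Z$ denote this common blow-up — a $k$-dimensional variety with $\eta_*[\widetilde Z]=[Z]$ — let $E$ be the exceptional divisor of $Bl_R Z$ and $\widetilde W$ that of $Bl_W Z$. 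Extending the identity $\mathscr{W}=\mathscr{D}\cdot\mathscr{R}$ to $\widetilde Z$ and using that each of $\mathscr{W}\cdot\mathscr{O}_{\widetilde Z}$, $\mathscr{D}\cdot\mathscr{O}_{\widetilde Z}$, $\mathscr{R}\cdot\mathscr{O}_{\widetilde Z}$ is invertible yields the Cartier-divisor identity $\widetilde W=E+\eta^{*}D$; put $\epsilon:=c_1(\mathscr{O}_{\widetilde Z}(E))$ and $\delta:=c_1(\eta^{*}\mathscr{O}_Z(D))$, so that $c_1(\mathscr{O}_{\widetilde Z}(\widetilde W))=\epsilon+\delta$.

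I would then feed this into Fulton's blow-up formula for Segre classes (\cite[Cor.~4.2.2]{Ful84}). With $\frac{t}{1+t}=\sum_{i\ge1}(-1)^{i-1}t^i$ it gives, as classes in $A_*(W)$ (pushing $s(R,Z)$ forward along $R\hookrightarrow W$ and $s(D,Z)$ along $D\hookrightarrow W$): $s(W,Z)=\eta_*\!\left(\frac{\epsilon+\delta}{1+\epsilon+\delta}\cap[\widetilde Z]\right)$ and $s(R,Z)=\eta_*\!\left(\frac{\epsilon}{1+\epsilon}\cap[\widetilde Z]\right)$, while, since $D$ is already Cartier, $s(D,Z)=c(N_DZ)^{-1}\cap[D]=\eta_*\!\left(\frac{\delta}{1+\delta}\cap[\widetilde Z]\right)$. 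Taking the dimension-$m$ part, using $\dim\widetilde Z=k$ and the projection formula $c_1(\mathscr{O}_Z(D))^j\cap\eta_*\alpha=\eta_*(\delta^j\cap\alpha)$, these become $s(W,Z)_m=(-1)^{k-m-1}\eta_*\big((\epsilon+\delta)^{k-m}\cap[\widetilde Z]\big)$, $\ s(D,Z)_m=(-1)^{k-m-1}\eta_*\big(\delta^{k-m}\cap[\widetilde Z]\big)$, and $s(R,Z)_{m+j}=(-1)^{k-m-j-1}\eta_*\big(\epsilon^{k-m-j}\cap[\widetilde Z]\big)$ for $0\le j\le k-m-1$, whereas $s(R,Z)_k=0$.

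The rest is a short binomial computation. In $\sum_{j=0}^{k-m}\binom{k-m}{j}(-D)^j\cdot s(R,Z)_{m+j}$ the $j=k-m$ term vanishes (it contains $s(R,Z)_k=0$); in each remaining term I bring $(-D)^j=(-1)^j c_1(\mathscr{O}_Z(D))^j$ inside $\eta_*$ via the projection formula, turning it into $\delta^j$, and the signs collapse to a single $(-1)^{k-m-1}$. Then $\sum_{j=0}^{k-m-1}\binom{k-m}{j}\delta^j\epsilon^{k-m-j}=(\epsilon+\delta)^{k-m}-\delta^{k-m}$, so the sum equals $(-1)^{k-m-1}\eta_*\big((\epsilon+\delta)^{k-m}\cap[\widetilde Z]\big)-(-1)^{k-m-1}\eta_*\big(\delta^{k-m}\cap[\widetilde Z]\big)=s(W,Z)_m-s(D,Z)_m$, which rearranges to the stated formula. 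The degenerate case $m=k$ is immediate, since all three Segre classes vanish in top degree.

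The main obstacle I anticipate is not depth but care in two places. First, the identification of $Bl_W Z$ with $Bl_R Z$ over $Z$ together with the divisor relation $\widetilde W=E+\eta^*D$: here both hypotheses — $\mathscr{W}=\mathscr{D}\cdot\mathscr{R}$ and $D$ Cartier — are genuinely needed, and one must keep straight which exceptional divisor belongs to which blow-up. Second, the index and sign bookkeeping: the decisive point is that $\sum_{j=0}^{k-m}\binom{k-m}{j}\delta^j\epsilon^{k-m-j}$ is \emph{missing} exactly the pure term $\delta^{k-m}$ — precisely because $s(R,Z)$ has no dimension-$k$ component — and that this missing term is, up to the sign $(-1)^{k-m-1}$, nothing but $s(D,Z)_m$. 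This is what produces the extra summand $s(D,Z)_m$ in the formula, and it is the easiest thing to mishandle.
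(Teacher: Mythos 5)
Your proof is correct. The paper states this proposition as a quoted result from Fulton (Proposition 9.2 of \cite{Ful84}) and gives no proof of its own, and your argument is essentially Fulton's original one: identify $Bl_RZ$ with $Bl_WZ$ via the hypothesis $\mathscr{W}=\mathscr{D}\cdot\mathscr{R}$, write the exceptional divisor of $W$ as $E+\eta^*D$, apply birational invariance of Segre classes, and expand $(\epsilon+\delta)^{k-m}$ binomially with the projection formula --- the same blow-up-and-expand technique the paper itself uses to prove its Theorem 1.5, of which it later observes this proposition is the special case $X=D$.
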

The following definition and corollary are from \cite{Alu15} and \cite{Alu94}:
\begin{defn}
Let $ s(X,Z)= \sum_{i}\alpha^i$, where $\alpha^i$ is the term of codimension $i$ in $X$ of the Segre class. Then, the $\mathscr{L}$-tensored Segre class of $X$ in $Z$ is
$$s(X,Z)^{\mathscr{L}} = \sum_{i}\frac{\alpha^i}{c(\mathscr{L})^{i+1}}$$
\end{defn}
The expression for the Segre class as a whole is the following:
\begin{cor}
\label{alufresidual}
With the above notation, we have:
$$s(W,Z) = s(D,Z)+s(R,Z)^{\mathscr{O}(D)}$$
\end{cor}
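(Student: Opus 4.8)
The plan is to derive the Corollary directly from Fulton's residual formula (Proposition~\ref{fultonresidual}) by unwinding the definition of the $\mathscr{O}(D)$-tensored Segre class and matching terms dimension by dimension. Since $Z$ is a $k$-dimensional variety, a class in $A_m(W)$ has codimension $i = k-m$ in $Z$, and I will use this to pass between the dimension-indexed statement of Proposition~\ref{fultonresidual} and the codimension-indexed definition of $s(-,Z)^{\mathscr{L}}$.

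First I would write $s(R,Z) = \sum_h \alpha^h$ with $\alpha^h$ the codimension-$h$ term, so that $\alpha^h = s(R,Z)_{k-h}$, and expand
$$s(R,Z)^{\mathscr{O}(D)} = \sum_h \frac{\alpha^h}{c(\mathscr{O}(D))^{h+1}} = \sum_h \alpha^h \cdot (1 + D)^{-(h+1)}.$$
Using the binomial series $(1+D)^{-(h+1)} = \sum_{j\ge 0} (-1)^j \binom{h+j}{j} D^j$ (a finite sum once capped against a cycle), the codimension-$i$ part of $s(R,Z)^{\mathscr{O}(D)}$ collects the terms with $h+j = i$, giving
$$\left[\, s(R,Z)^{\mathscr{O}(D)} \,\right]_{k-i} = \sum_{j=0}^{i} (-1)^j \binom{i}{j} D^j \cdot \alpha^{i-j} = \sum_{j=0}^{k-m} \binom{k-m}{j} (-D)^j \cdot s(R,Z)_{m+j},$$
where $m = k-i$. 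This is precisely the summation appearing in Proposition~\ref{fultonresidual}.

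Next I would invoke Proposition~\ref{fultonresidual}, whose hypotheses ($D \subseteq W \subseteq Z$ closed embeddings, $Z$ a $k$-dimensional variety, $D$ a Cartier divisor, $W = D \cup R$ set-theoretically with $\mathscr{W} = \mathscr{D}\cdot\mathscr{R}$) coincide with those of the Corollary, to conclude that
$$s(W,Z)_m = s(D,Z)_m + \left[\, s(R,Z)^{\mathscr{O}(D)} \,\right]_m$$
in $A_m(W)$ for every $m$. Summing over $m$ — and recalling that $s(D,Z)$, $s(R,Z)$ and $s(W,Z)$ are all viewed in $A_*(W)$ via the inclusions of $D$ and $R$, with $D$ acting by restriction — yields the claimed identity $s(W,Z) = s(D,Z) + s(R,Z)^{\mathscr{O}(D)}$.

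The substance of the argument is entirely contained in Proposition~\ref{fultonresidual}; the remaining work is the bookkeeping translation between the dimension and codimension gradings. The only real obstacle, and a minor one, is keeping the binomial identity $\binom{-(h+1)}{j} = (-1)^j \binom{h+j}{j}$ and the reindexing $h+j=i$ consistent, and checking that the summation range $0 \le j \le k-m$ agrees on both sides.
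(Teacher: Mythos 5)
Your derivation is correct and is exactly the intended one: the paper states this result as a quoted corollary of Proposition~\ref{fultonresidual} without proof, and your expansion $(1+D)^{-(h+1)}=\sum_{j\ge 0}(-1)^j\binom{h+j}{j}D^j$ together with the reindexing $h+j=k-m$ (so that $\binom{h+j}{j}=\binom{k-m}{j}$) reproduces Fulton's sum term by term. The only point worth flagging is that you (correctly) read the grading in the definition of $s(X,Z)^{\mathscr{L}}$ as codimension in the ambient $Z$, i.e.\ $\alpha^{i}=s(R,Z)_{k-i}$, which is the convention under which the corollary is equivalent to Proposition~\ref{fultonresidual}; the definition as literally worded (``codimension $i$ in $X$'') would shift the indices and break the match.
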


In this paper, we aim to generalize Fulton's Residual Intersection formula beyond the case of Cartier divisors. We study the case when $W$ has irreducible components $W_i$, and we aim to express the Segre class $s(W,Z)$ in terms of $s(W_i, Z)$. We introduce the following notation:

\begin{defn}
Let $Z$ be a scheme. Let $E, F$ be vector bundles on $Z$. We define the residual product of the Segre class of vector bundles as follows: 
$$s(E)\odot s(F):= \sum_{k=0}^{\infty}\sum_{\substack{i+j=k\\i,j \geq 1}} \binom{k}{i}s_{i-rank(E)}(E)s_{j-rank(F)}(F),$$
where $s(E)\odot s(F): A_{*}(X)\rightarrow A_{*}(X)$. Then,  note that $s(E)\odot s(F) = \sum_{k=0}^{\infty}\{s(E)\odot s(F)\}^k,$ where  $\{s(E)\odot s(F)\}^k$ is defined to be the operator from $A_{*}(X)$ to $A_{*-k}(X)$. Then, 
$$\{s(E)\odot s(F)\}^k= \sum_{\substack{i+j=k\\i,j \geq 1}} \binom{k}{i}s_{i-rank(E)}(E)s_{j-rank(F)}(F).$$
\end{defn}
\begin{defn}
Let  $Y_{1}, \dots, Y_n$ be regularly embedded subschemes of $Z$. For all subsets $S\subseteq \{1,2,\dots ,n\}$, let $Y_{S}=\bigcap_{i}Y_i$. We say that they intersect transversely, if 
for all subsets $S\subseteq \{1,2,\dots ,n\}$, $N_{Y_S/Z} = \bigoplus_{i}N_{Y_{i}/Z}|_{Y_S}$. 
\end{defn}
Note that the definition of trnaverse intersection that we will be using in this paper requires neither the subvarieties $Y_i$ nor the scheme $Z$ to be nonsingular. 
\begin{thm}
\label{theorem1}
If  $X,Y$ are regularly embedded in the scheme $Z$ with ideal sheaves $\mathscr{I}, \mathscr{J}$. Furthermore if $X, Y$ intersect transversely in $Z$, and let $W$ be the scheme supported on $X\cup Y$ defined by the ideal sheaf $\mathscr{I}\cdot\mathscr{J}$, then, 
$$s(W,Z)=s(X,Z)+s(Y,Z)-s(N_{X}Z)\odot s(N_{Y}Z)\cap [X\cap Y]$$
in $A_*(W)$. The obvious push forward of inclusion is omitted. 
\end{thm}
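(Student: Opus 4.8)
The plan is to reduce the statement to Fulton's residual formula (Proposition~\ref{fultonresidual}, in the equivalent form of Corollary~\ref{alufresidual}) by blowing up $Z$ along $X$; one reduces at once to the case where $Z$ is a variety. Let $\pi\colon\tilde Z=\mathrm{Bl}_XZ\to Z$ be this blowup and $E$ its exceptional divisor. Since $X\hookrightarrow Z$ is a regular embedding of codimension $d$, we have $E=P(N_XZ)$ with $\pi|_E\colon E\to X$ the bundle projection, $\mathscr I\mathscr O_{\tilde Z}=\mathscr O_{\tilde Z}(-E)$, and $\mathscr O_{\tilde Z}(E)|_E\cong\mathscr O_E(-1)$ the tautological line bundle of $E=P(N_XZ)$. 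From $(\mathscr I\mathscr J)\mathscr O_{\tilde Z}=(\mathscr I\mathscr O_{\tilde Z})(\mathscr J\mathscr O_{\tilde Z})$, the scheme $\tilde W:=\pi^{-1}(W)$ has ideal sheaf $\mathscr O_{\tilde Z}(-E)\cdot(\mathscr J\mathscr O_{\tilde Z})$; writing $\tilde Y\subseteq\tilde Z$ for the subscheme with ideal $\mathscr J\mathscr O_{\tilde Z}$, we obtain $\tilde W=E\cup\tilde Y$ with $\mathscr I_{\tilde W}=\mathscr I_E\cdot\mathscr I_{\tilde Y}$ and $E\subseteq\tilde W$ a Cartier divisor. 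Hence Corollary~\ref{alufresidual}, applied inside $\tilde Z$, yields
$$s(\tilde W,\tilde Z)=s(E,\tilde Z)+s(\tilde Y,\tilde Z)^{\mathscr O_{\tilde Z}(E)}\qquad\text{in }A_*(\tilde W).$$

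I would then push this identity forward along the proper map $\pi$ and identify all three terms as classes on $W$. For the left-hand side, birational invariance of Segre classes (\cite[Proposition~4.2]{Ful84}) gives $\pi_*s(\tilde W,\tilde Z)=s(W,Z)$. For $s(E,\tilde Z)$: since $E$ is Cartier with $N_E\tilde Z\cong\mathscr O_E(-1)$ we have $s(E,\tilde Z)=\sum_{m\ge0}c_1(\mathscr O_E(1))^m\cap[E]$, and applying $(\pi|_E)_*$ to the $\mathbb P^{d-1}$-bundle $E=P(N_XZ)$ collapses this, by Grothendieck's formula, to $\sum_{j\ge0}s_j(N_XZ)\cap[X]=s(X,Z)$.

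The substance of the proof is the third term. Using transversality I would first show that $\tilde Y$ is the strict transform $\mathrm{Bl}_{X\cap Y}Y$, that it is regularly embedded in $\tilde Z$ with $N_{\tilde Y}\tilde Z=\rho^*N_YZ$ (where $\rho:=\pi|_{\tilde Y}\colon\tilde Y\to Y$), and that its exceptional divisor $E_Y:=E\cap\tilde Y$ equals $P(N_{X\cap Y}Y)$ with $N_{X\cap Y}Y\cong N_XZ|_{X\cap Y}$ of rank $d$; here transversality is used concretely in the form that $X$ and $Y$ are $\mathrm{Tor}$-independent over $Z$, so that $\pi^{-1}(Y)$ is the naive blowup $\mathrm{Bl}_{X\cap Y}Y$ and a local regular sequence defining $Y$ pulls back to one defining $\tilde Y$, while $N_{X\cap Y}Y\cong N_XZ|_{X\cap Y}$ is immediate from the definition of transverse intersection. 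Consequently $s(\tilde Y,\tilde Z)=c(\rho^*N_YZ)^{-1}\cap[\tilde Y]$; writing it as $\sum_i\alpha^i$ with $\alpha^i$ the part of codimension $i$ in the ambient $\tilde Z$ --- the convention under which Corollary~\ref{alufresidual} reproduces Proposition~\ref{fultonresidual}, and which makes $\alpha^i=s_{i-\mathrm{rk}\,N_YZ}(\rho^*N_YZ)\cap[\tilde Y]$ --- the tensored class is $\sum_i\alpha^i\cdot c(\mathscr O_{\tilde Z}(E)|_{\tilde Y})^{-(i+1)}$ with $c_1(\mathscr O_{\tilde Z}(E)|_{\tilde Y})=[E_Y]$. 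The portion of this sum involving no power of $E_Y$ reassembles into $s(\tilde Y,\tilde Z)$, whose pushforward is $s(Y,Z)$ by birational invariance; every remaining term carries a positive power $E_Y^t$ and is therefore supported on $E_Y$, where I would use $E_Y^t\cap[\tilde Y]=c_1(\mathscr O_{E_Y}(-1))^{t-1}\cap[E_Y]$ and then Grothendieck's formula a second time, for the $\mathbb P^{d-1}$-bundle $E_Y=P(N_XZ|_{X\cap Y})$. A bookkeeping computation with binomial coefficients then identifies the total of these $E_Y$-supported terms with $-\,s(N_XZ)\odot s(N_YZ)\cap[X\cap Y]$: the shift by $\mathrm{rk}\,N_YZ$ built into $\odot$ comes from the ambient-codimension normalization of the tensored Segre class, the shift by $\mathrm{rk}\,N_XZ=d$ from the second application of Grothendieck's formula, the binomial $\binom{k}{i}$ reorganizes exactly as $\binom{(a+d)+(b+\mathrm{rk}\,N_YZ)}{a+d}$, and the signs from $\mathscr O_{\tilde Z}(E)|_E\cong\mathscr O_E(-1)$ combine into the single overall minus sign. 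Adding the three pushed-forward terms produces $s(W,Z)=s(X,Z)+s(Y,Z)-s(N_XZ)\odot s(N_YZ)\cap[X\cap Y]$.

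The main obstacle, I expect, is this last step, in two parts. The first is the structural input: proving that transversality really does force $\tilde Y=\mathrm{Bl}_{X\cap Y}Y$ to be regularly embedded in $\tilde Z$ with the expected normal bundle and exceptional divisor --- this is where the hypothesis genuinely enters and requires a local $\mathrm{Tor}$-independence argument. The second is the careful alignment of conventions --- which codimension the tensored Segre class is graded by, the $(-1)$'s from the tautological bundle, and the interaction of the two Grothendieck collapses --- so that the $\odot$-product emerges exactly and with the correct sign. By comparison, the pushforwards of $s(\tilde W,\tilde Z)$ and of $s(E,\tilde Z)$ are routine once the setup is in place. A useful consistency check is that blowing up along $Y$ instead of $X$ must give the same formula, which it does, since $\odot$ is symmetric in its two arguments.
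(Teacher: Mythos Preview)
Your proposal is correct and complete in outline; the structural input you flag (that transversality forces $\pi^{-1}(Y)=\tilde Y=\mathrm{Bl}_{X\cap Y}Y$ to be regularly embedded with $N_{\tilde Y}\tilde Z\cong\rho^*N_YZ$) is precisely the content of the paper's Lemma~\ref{technical}, and the bookkeeping you describe does collapse to the $\odot$ expression with the correct sign.

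However, your route differs from the paper's official proof of Theorem~\ref{theorem1}. The paper performs the \emph{full} blowup $\mathrm{Bl}_WZ=\mathrm{Bl}_{\tilde Y}\mathrm{Bl}_XZ$, writes the exceptional divisor as $\tilde E_X+E_Y$, expands $(\tilde E_X+E_Y)^{\dim Z-d}$ by the binomial theorem, and pushes each monomial $\tilde E_X^i E_Y^{\dim Z-d-i}$ down directly via the projection formula and Lemma~\ref{technical}; the cross terms become the $\odot$ product by inspection. You instead stop after one blowup and invoke Corollary~\ref{alufresidual}, then unpack the tensored Segre class. Interestingly, the paper does carry out essentially your computation---but in Section~3, as a separate discussion relating the $\odot$ notation to the $Q$-polynomial, and as the engine for the proof of Theorem~\ref{theorem2} rather than Theorem~\ref{theorem1}. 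So you have in effect rediscovered the paper's second approach. What your route buys is a cleaner reduction to an existing result and a transparent explanation of why the correction term is supported on $X\cap Y$; what the paper's double-blowup route buys is that it generalizes mechanically to $n$ transverse components (Corollary~\ref{many}), where the multinomial expansion replaces the binomial one and no iterated tensoring is needed.
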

Transversality is a very strong condition and Segre classes are usually calculated through blowups. Therefore, in practice, one would like to obtain an analogous statement by replacing the transversality condition with some given properties of the scheme after blowups.
\begin{thm}\label{theorem2}
Let $X,Y$ be regularly embedded in the scheme $Z$, with $X\cap Y$ regularly embedded in both $X$ and $Y$. Let $W$ be supported on $X\cup Y$, and consider the blowup $\pi: Bl_YZ \rightarrow Z.$ If $\pi^{-1}(W)$ is supported on $ E\cup \tilde{X}$, such that the proper transform of  $X$, $\tilde{X}$, is residual to the exceptional divisor $E$. Then,
$$s(W,Z) = s(X,Z)+s(Y,Z)- \frac{c(N_{X\cap Y}X)c(N_{X\cap Y}Y)}{c(N_{X\cap Y}Z)}\cdot (s(N_{X\cap Y}Y)\odot s(N_{X\cap Y}Y))\cap [X\cap Y] $$
in $A_*(W)$. The obvious pushforwards of inclusion is omitted. 
\end{thm}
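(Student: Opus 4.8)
\emph{Plan.} We pass to the blowup $\pi\colon\widetilde Z:=Bl_YZ\to Z$, where the exceptional divisor $E$ is Cartier, apply the divisorial residual formula there, and push the identity back to $Z$. Write $\mathscr I_X,\mathscr I_Y,\mathscr I_W$ for the ideal sheaves of $X,Y,W$ and $d=\operatorname{codim}_Z X$. First note that the residual hypothesis forces $\tilde X=\pi^{-1}(X)$ scheme-theoretically: as $\mathscr I_Y\mathscr O_{\widetilde Z}=\mathscr I_E$, the preimage ideal is $\pi^*\mathscr I_W\,\mathscr O_{\widetilde Z}=\mathscr I_E\cdot(\pi^*\mathscr I_X\,\mathscr O_{\widetilde Z})$, and comparing with $\mathscr I_{\pi^{-1}(W)}=\mathscr I_E\cdot\mathscr I_{\tilde X}$ and cancelling the invertible ideal $\mathscr I_E$ gives $\mathscr I_{\tilde X}=\pi^*\mathscr I_X\,\mathscr O_{\widetilde Z}$. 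Hence $\rho:=\pi|_{\tilde X}\colon\tilde X\to X$ is the blowup $Bl_{X\cap Y}X$, and since $X\cap Y$ is regularly embedded in $X$ its exceptional divisor is $E_X:=E\cap\tilde X=P(N_{X\cap Y}X)$, with $\mathscr O_{\widetilde Z}(E)|_{\tilde X}=\mathscr O_{\tilde X}(E_X)$.

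\emph{Pushing down.} Since $\pi^{-1}(W)=E\cup\tilde X$ with $\mathscr I_{\pi^{-1}(W)}=\mathscr I_E\cdot\mathscr I_{\tilde X}$ and $E$ Cartier, Corollary~\ref{alufresidual} gives $s(\pi^{-1}(W),\widetilde Z)=s(E,\widetilde Z)+s(\tilde X,\widetilde Z)^{\mathscr O(E)}$. As $\pi$ is proper and birational, the birational invariance of Segre classes (\cite[Prop.~4.2]{Ful84}) yields $s(W,Z)=\pi_*s(\pi^{-1}(W),\widetilde Z)$ and, using $\tilde X=\pi^{-1}(X)$, also $s(X,Z)=\pi_*s(\tilde X,\widetilde Z)$. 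Therefore
$$s(W,Z)=\pi_*\,s(E,\widetilde Z)+\pi_*\bigl(s(\tilde X,\widetilde Z)^{\mathscr O(E)}\bigr).$$
For the first term, $Y$ regularly embedded gives $E=P(N_YZ)$ with $\mathscr O_{\widetilde Z}(E)|_E=\mathscr O_E(-1)$, and $\pi_*\bigl(\sum_{i\ge0}c_1(\mathscr O_E(1))^i\cap[E]\bigr)=s(N_YZ)\cap[Y]$, so $\pi_*\,s(E,\widetilde Z)=s(Y,Z)$. For the second term, the surjection $\rho^*(\mathscr I_X/\mathscr I_X^2)\twoheadrightarrow\mathscr I_{\tilde X}/\mathscr I_{\tilde X}^2$ embeds $C_{\tilde X}\widetilde Z$ into the rank-$d$ bundle $\rho^*N_XZ$; both are pure of dimension $\dim Z$ and $\rho^*N_XZ$ is irreducible, so $[C_{\tilde X}\widetilde Z]=m[\rho^*N_XZ]$, and $m=1$ because $\pi_*s(\tilde X,\widetilde Z)=s(X,Z)$. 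Thus $s(\tilde X,\widetilde Z)=\rho^*s(X,Z)$, so it only remains to evaluate the defect $D:=\pi_*\bigl(s(\tilde X,\widetilde Z)^{\mathscr O(E)}-s(\tilde X,\widetilde Z)\bigr)$, which is supported on $X\cap Y$, and to check that $s(X,Z)+s(Y,Z)+D$ is the asserted class.

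\emph{The defect.} Writing $s(\tilde X,\widetilde Z)=\sum_i\alpha^i$ with $\alpha^i$ the codimension-$i$ part, the twist replaces $\alpha^i$ by $(1+[E_X])^{-(i+1)}\cap\alpha^i$, so $s(\tilde X,\widetilde Z)^{\mathscr O(E)}-s(\tilde X,\widetilde Z)=\sum_{i}\sum_{m\ge1}(-1)^m\binom{i+m}{m}[E_X]^m\cap\alpha^i$. Now $\alpha^i=s_{i-d}(\rho^*N_XZ)\cap[\tilde X]$, $[E_X]^m\cap[\tilde X]=\bigl(-c_1\mathscr O_{E_X}(1)\bigr)^{m-1}\cap[E_X]$, and $\rho^*N_XZ|_{E_X}=\sigma^*(N_XZ|_{X\cap Y})$ along the projection $\sigma\colon E_X=P(N_{X\cap Y}X)\to X\cap Y$; pushing forward by $\sigma$ via Fulton's projective-bundle formula, $D$ becomes a double sum of products $\binom{\bullet}{\bullet}\,s_\bullet(N_XZ|_{X\cap Y})\,s_\bullet(N_{X\cap Y}X)\cap[X\cap Y]$. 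Using the conormal exact sequences $0\to N_{X\cap Y}X\to N_{X\cap Y}Z\to N_XZ|_{X\cap Y}\to0$ and $0\to N_{X\cap Y}Y\to N_{X\cap Y}Z\to N_YZ|_{X\cap Y}\to0$ on $X\cap Y$ (in particular $\tfrac{c(N_{X\cap Y}X)c(N_{X\cap Y}Y)}{c(N_{X\cap Y}Z)}$ is the Segre class of the excess bundle $N_{X\cap Y}Z/(N_{X\cap Y}X\oplus N_{X\cap Y}Y)$) to re-express $s(N_XZ|_{X\cap Y})$, and comparing the resulting binomial coefficients with the definition of $\odot$, one obtains $D=-\tfrac{c(N_{X\cap Y}X)c(N_{X\cap Y}Y)}{c(N_{X\cap Y}Z)}\cdot\bigl(s(N_{X\cap Y}X)\odot s(N_{X\cap Y}Y)\bigr)\cap[X\cap Y]$, which together with the preceding paragraph gives the claim.

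\emph{The hard part.} The technical core is the last paragraph: since $\tilde X$ is not regularly embedded in $\widetilde Z$ in general, one must run the argument through the normal cone $C_{\tilde X}\widetilde Z$ rather than a normal bundle, and the combinatorial identity that turns the binomials coming from the $\mathscr O(E)$-twist and the $P(N_{X\cap Y}X)$-pushforward into the residual product $\odot$—while absorbing the conormal Chern-class corrections into the prefactor $\tfrac{c(N_{X\cap Y}X)c(N_{X\cap Y}Y)}{c(N_{X\cap Y}Z)}$—is the step that requires the real care. As a consistency check, when $X$ and $Y$ meet transversely the residual hypothesis holds automatically, $N_{X\cap Y}X=N_YZ|_{X\cap Y}$, $N_{X\cap Y}Y=N_XZ|_{X\cap Y}$, $N_{X\cap Y}Z=N_XZ|_{X\cap Y}\oplus N_YZ|_{X\cap Y}$, the prefactor is $1$, and the formula collapses to $s(W,Z)=s(X,Z)+s(Y,Z)-(s(N_XZ)\odot s(N_YZ))\cap[X\cap Y]$, recovering Theorem~\ref{theorem1}.
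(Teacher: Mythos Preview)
Your overall strategy—pass to $\widetilde Z$, apply the divisorial residual formula, and push down—matches the paper's. The gap is in how you handle $s(\tilde X,\widetilde Z)$.

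The claim $\tilde X=\pi^{-1}(X)$ is not justified and is in general false. Your argument for it reads ``$\pi^*\mathscr I_W\,\mathscr O_{\widetilde Z}=\mathscr I_E\cdot(\pi^*\mathscr I_X\,\mathscr O_{\widetilde Z})$'', but this equality presupposes $\mathscr I_W=\mathscr I_X\cdot\mathscr I_Y$, which is nowhere assumed. For a concrete failure, take $Z=\mathbb A^4$, $X=V(x,y)$, $Y=V(x,z)$; on the chart $x=zt$ of $Bl_YZ$ one has $\tilde X=V(t,y)$ while $\pi^{-1}(X)=V(zt,y)$, so $\tilde X\subsetneq\pi^{-1}(X)$ and the surjection $\rho^*(\mathscr I_X/\mathscr I_X^2)\twoheadrightarrow\mathscr I_{\tilde X}/\mathscr I_{\tilde X}^2$ you invoke does not exist (the image of $x$ is $zt$, not $t$). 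In fact, $\tilde X=\pi^{-1}(X)$ is essentially the content of Lemma~\ref{technical} and characterizes the \emph{transversal} situation of Theorem~\ref{theorem1}; assuming it here collapses Theorem~\ref{theorem2} back to Theorem~\ref{theorem1}.

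Because of this, the identity $s(\tilde X,\widetilde Z)=\rho^*s(X,Z)$ is wrong in the non-transversal case, and your defect computation starts from the wrong class. Under the hypotheses $\tilde X$ \emph{is} regularly embedded in $\widetilde Z$ (so one should work with $N_{\tilde X}\widetilde Z$, not a cone), but $c(N_{\tilde X}\widetilde Z)\neq c(\rho^*N_XZ)$: the paper's Proposition~\ref{chern class of blowup}, proved by deformation to the normal cone \`a la Aluffi, shows that the difference $\Gamma'=c(N_{\tilde X}\widetilde Z)-c(\pi^*N_XZ)$ is supported on $E\cap\tilde X$ and is governed precisely by the excess bundle $N_{X\cap Y}Z/(N_{X\cap Y}X\oplus N_{X\cap Y}Y)$. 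It is this $\Gamma'$, inserted into $\bigl(c(\pi^*N_XZ)+\Gamma'\bigr)^{-1}$ and then twisted by $\mathscr O(E)$, that produces the prefactor $\tfrac{c(N_{X\cap Y}X)c(N_{X\cap Y}Y)}{c(N_{X\cap Y}Z)}$. Your final ``combinatorial identity'' cannot manufacture this prefactor from $s(\rho^*N_XZ)$ alone; the excess-bundle contribution is genuinely geometric input, not bookkeeping, and that is the step your argument is missing.
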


\vspace{5mm}
\section*{Acknowledgments}
The author thanks Paolo Aluffi for countless insightful conversations. The author is supported by the Caltech Summer Undergraduate Research Fellowship (SURF) program and the Ernest R. Roberts SURF fellowship.
\vspace{5mm}
\section{Proof of  Theorem \ref{theorem1}}

We will first prove a lemma regarding the properties of our notion of transverse intersection. 
\begin{lem}\label{technical}
Let  $X,Y$ be regularly embedded subvarieties of the scheme $Z$, intersects transversely. $X\cap Y$ is regularly embedded in $X$ and $Y$. Let $\pi: Bl_{Y}Z\rightarrow Z$ denote the blowup of $Z$ at $Y$,  $\tilde{Z} = Bl_{Y}Z$ and $\tilde{X}$ be the proper transform of $X$. Then:
\begin{enumerate}[label=(\alph*)]
    \item $N_{\tilde{X}}\tilde{Z}= \pi^*N_{X}Z$.
    \item $\pi^{-1}(X) = \tilde{X}$
\end{enumerate}
\end{lem}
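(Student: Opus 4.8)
The plan is to reduce everything to a local computation near $X\cap Y$ and to extract both statements from a single $\operatorname{Tor}$-vanishing. Away from $Y$ the map $\pi$ is an isomorphism and both (a) and (b) are immediate, so fix a point $P\in X\cap Y$ and write $\mathscr I_P=(x_1,\dots,x_d)$, $\mathscr J_P=(y_1,\dots,y_c)$ with $x_\bullet$ and $y_\bullet$ regular sequences ($d=\operatorname{codim}_Z X$, $c=\operatorname{codim}_Z Y$). Since $X\cap Y$ is regularly embedded in $X$ and $X$ in $Z$, the subscheme $X\cap Y$ is regularly embedded in $Z$; comparing ranks in $N_{X\cap Y/Z}=N_{X/Z}|_{X\cap Y}\oplus N_{Y/Z}|_{X\cap Y}$ forces $\operatorname{codim}_Z(X\cap Y)=d+c$, and the images of $x_1,\dots,x_d,y_1,\dots,y_c$ generate $N^\vee_{X\cap Y/Z}$, hence by Nakayama minimally generate $(\mathscr I+\mathscr J)_P$; as that ideal is generated by a regular sequence of length $d+c$, the sequence $x_1,\dots,x_d,y_1,\dots,y_c$ is itself regular in $\mathscr O_{Z,P}$. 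This is the only place transversality is used.

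For (b): the proper transform is $\tilde X=Bl_{X\cap Y}X=\operatorname{Proj}\bigoplus_{n\ge0}\mathscr J^n\mathscr O_X$ and $\pi^{-1}(X)=\tilde Z\times_Z X=\operatorname{Proj}\bigoplus_{n\ge0}(\mathscr J^n\otimes_{\mathscr O_Z}\mathscr O_X)$, and the canonical surjection of Rees algebras $\bigoplus_n(\mathscr J^n\otimes_{\mathscr O_Z}\mathscr O_X)\twoheadrightarrow\bigoplus_n\mathscr J^n\mathscr O_X$ exhibits $\tilde X$ as a closed subscheme of $\pi^{-1}(X)$. This surjection is an isomorphism as soon as $\operatorname{Tor}_1^{\mathscr O_Z}(\mathscr O_Z/\mathscr J^n,\mathscr O_X)=0$ for every $n$, which I would prove by working in $\mathscr O_{Z,P}$ and filtering $\mathscr O_{Z,P}/\mathscr J^n$ by powers of $\mathscr J$: the graded pieces $\mathscr J^k/\mathscr J^{k+1}$ are free over $\mathscr O_{Z,P}/\mathscr J$ because $y_\bullet$ is regular, and $x_1,\dots,x_d$ stays regular modulo $\mathscr J$ because a permutation of a regular sequence in a local ring is again regular, so $\operatorname{Tor}^{\mathscr O_{Z,P}}_{\ge1}(\mathscr O_{Z,P}/(x_\bullet),-)$ vanishes on every graded piece and hence on $\mathscr O_{Z,P}/\mathscr J^n$. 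Therefore $\pi^{-1}(X)=\tilde X$.

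For (a): by (b) the ideal sheaf of $\tilde X$ in $\tilde Z$ is $\mathscr I\cdot\mathscr O_{\tilde Z}$. The vanishing just obtained (in all positive degrees) also gives $\operatorname{Tor}^{\mathscr O_Z}_{\ge1}(\mathscr O_X,\mathscr J^n)=0$, and since each affine chart of $\tilde Z$ is, as an $\mathscr O_Z$-module, the filtered colimit $\mathscr O_Z\to\mathscr J\to\mathscr J^2\to\cdots$ with transition maps multiplication by a local section of $\mathscr J$, this yields $\operatorname{Tor}^{\mathscr O_Z}_{\ge1}(\mathscr O_X,\mathscr O_{\tilde Z})=0$. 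Hence the Koszul complex on $x_1,\dots,x_d$ over each chart of $\tilde Z$ is acyclic in positive degrees, so $x_\bullet$ is a regular sequence there and $\tilde X$ is regularly embedded in $\tilde Z$ of codimension $d$. Consequently the surjection $\pi^*\mathscr I\twoheadrightarrow\mathscr I\cdot\mathscr O_{\tilde Z}=\mathscr I_{\tilde X}$ restricts on $\tilde X$ to a surjection $N^\vee_X Z|_{\tilde X}\twoheadrightarrow \mathscr I_{\tilde X}/\mathscr I_{\tilde X}^2=N^\vee_{\tilde X}\tilde Z$ of locally free sheaves of equal rank $d$, hence an isomorphism; dualizing gives $N_{\tilde X}\tilde Z=\pi^*N_X Z$.

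The step I expect to be the main obstacle is the $\operatorname{Tor}$-vanishing $\operatorname{Tor}_1^{\mathscr O_Z}(\mathscr O_Z/\mathscr J^n,\mathscr O_X)=0$ — equivalently, that $X$ is $\operatorname{Tor}$-independent of every infinitesimal neighborhood of $Y$ — together with the translation of transversality into the joint regular sequence; once these are in place, identifying the proper transform with $Bl_{X\cap Y}X$, the $\operatorname{Proj}$/Rees-algebra bookkeeping, and the equal-rank surjection of conormal bundles are formal. A secondary point requiring care is propagating the regular-sequence property uniformly through the blowup charts $\mathscr O_Z[\mathscr J/y_i]$, which is why it is convenient to present those charts as filtered colimits of the ideal sheaves $\mathscr J^n$.
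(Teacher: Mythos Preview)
Your argument is correct and takes a genuinely different route from the paper's. The paper works by writing down the affine charts of the blowup explicitly via \cite[B.6.10]{Ful84}: with $I_X=(a_1,\dots,a_n)$ and $I_Y=(b_1,\dots,b_m)$ at a closed point of $X\cap Y$, it presents $\mathscr O_{\tilde Z,q}=A[x_1,\dots,x_{m-1}]/(b_i-b_m x_i)$ and $\mathscr O_{\tilde X,q}$ as the analogous quotient of $\mathscr O_{X,p}[x_1,\dots,x_{m-1}]$, and then simply reads off that the kernel of $\mathscr O_{\tilde Z,q}\twoheadrightarrow \mathscr O_{\tilde X,q}$ is $(a_1,\dots,a_n)=(\pi^*\mathscr I_X)_q$, which yields both (a) and (b) at once. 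Your approach replaces this explicit presentation with the homological statement that $X$ is $\operatorname{Tor}$-independent of every infinitesimal thickening of $Y$, deduced from the same joint regular sequence; you then propagate this to the blowup by writing the affine charts as filtered colimits of the $\mathscr J^n$. What the paper's argument buys is brevity and concreteness---one short comparison of two polynomial-ring quotients. What your argument buys is a cleaner conceptual picture (the whole lemma is the single vanishing $\operatorname{Tor}^{\mathscr O_Z}_{\ge 1}(\mathscr O_X,\mathscr O_Z/\mathscr J^n)=0$) and a criterion that would continue to apply in any situation where that $\operatorname{Tor}$-independence holds, even without literal transversality. Both proofs ultimately hinge on the same local fact, namely that $x_1,\dots,x_d,y_1,\dots,y_c$ form a regular sequence, which you extract from the rank count on $N_{X\cap Y/Z}$ and the paper extracts (somewhat less explicitly) from the hypothesis that $X\cap Y\hookrightarrow X$ is regular of the correct codimension.
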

\begin{proof}
To show (a), since we have the normal sheaves $\mathscr{N}_{X}Y = (\mathscr{I}_{X}/\mathscr{I}^2_{X})^{\vee}$ and  $\mathscr{N}_{\tilde{X}}\tilde{Z} = (\mathscr{I}_{\tilde{X}}/\mathscr{I}^2_{\tilde{X}})^{\vee}$
Since $\mathscr{N}_XY$ is locally free, then $\pi^*(\mathscr{N}_XY)\simeq (\pi^*(\mathscr{I}_{X}/\mathscr{I}^2_{X}))^{\vee}$. 
Therefore, it is enough to show that $\pi^*\mathscr{I}_X \simeq \mathscr{I}_{\tilde{X}}.$ Note that this would imply (b) as well.

We will check the above isomorphism at the level of the local ring at the closed point. Note that, if $p \notin X\cap Y$ then the blowup does not affect $X$, and $\pi$ is the identity. Therefore $\pi^*\mathscr{I}_{X,p}\simeq \mathscr{I}_{\tilde{X},\pi^{-1}(p)}$. Hence, it is sufficient to check the closed point $p$ in $X\cap Y$, and we wish to show that for all points $q\in \pi^{-1}(p)$, $(\pi^*\mathscr{I}_X)_q \simeq \mathscr{I}_{\tilde{X},q}$. Let $A$ be $\mathscr{O}_{Z,p}$, where $p$ is a closed point in $X\cap Y$,  Since $X, Y$ are regularly embedded, the stalks of their ideals sheaves over the closed points can be generated by regular sequences. Based on the proof of previous lemma, we can assume that  $I_X:= \mathscr{I}_{X,p}$  is generated by the regular sequence $(a_1,a_2, \cdots, a_n)$ and $I_Y :=  \mathscr{I}_{Y,p}$ is generated by the regular sequence $ (b_{1}, b_2, \cdots b_m)$

By \cite{Ful84} B.6.10, over any closed point $q$ such that $\pi(q) = p$, the structure sheaf of the blowup evaluated at $q$ can be written as the following,
$$\mathscr{O}_{\tilde{Z},q}= A[x_1,x_2, \cdots, x_{m-1}]/(b_1-b_{m}x_1, b_2-b_{m}x_2, \cdots, b_{m-1}-b_{m}x_{m-1})$$
Now consider $\tilde{X}= Bl_{X\cap Y}X$. Since $X\cap Y \hookrightarrow X$ is regular embedding, we have $I_{X\cap Y}\otimes \mathscr{O}_{X,p}:=(\bar{b}_1, \bar{b}_2,\cdots, \bar{b}_m)$ also forms a regular sequence.
$$\mathscr{O}_{\tilde{X},q} = \mathscr{O}_{X,p}[x_1,x_2,\cdots,x_{m-1}]/(\bar{b}_1-\bar{b}_mx_1, \cdots, \bar{b}_{m-1} - \bar{b}_m x_{m-1})$$$$ = A[x_1,x_2,\cdots, x_{m-1}]/(a_1,\cdots, a_n,b_1-b_{m}x_1,  \cdots, b_{m-1}-b_{m}x_{m-1} )$$
Then the inclusion map $\tilde{X}\hookrightarrow \tilde{Z}$ induces surjection $\mathscr{O}_{\tilde{Z},q}\rightarrow \mathscr{O}_{\tilde{X},q}$, given by $x_i\mapsto x_i$. The kernel is exactly $(a_1,a_2,\cdots, a_n)$. This is exactly $(\pi^*\mathscr{I}_X)_q$. Thus, we have the equality $\mathscr{I}_{\tilde{X},q} \simeq (\pi^*\mathscr{I}_{X})_q$.

\end{proof}
\vspace{10mm}
With the above lemma, we can prove \ref{theorem1}:
\begin{proof}
It is equivalent to proving the following:
$$s(W, Z)_d=s(X,Z)_d+s(Y,Z)_d-\sum_{i=1}^{dim(Z)-d-1} \binom{dim(Z)-d}{i} s_{dim(Y)-d-i}(N_YZ)s_{dim(X)+i-dim(Z)}(N_XZ).$$
By Proposition 2.2 we have the following commutative diagram: 
\begin{tikzcd}
Bl_{X\cup Y}Z  \arrow[r, "\pi_Y"] \arrow[dr,"\pi"]  & Bl_{X}Z \arrow[d, "\pi_X"]\\  & Z
\end{tikzcd}
Let $E_X$ be the exceptional divisor of the first blowup and $E_Y$ be the exceptional divisor of the second blowup.
Since the ideal sheaf of $W$ is given by $\mathscr{I}\cdot \mathscr{J}$ , then,$$Bl_{W}Z = Bl_{\tilde{Y}}Bl_XZ = Bl_{\tilde{X}}Bl_YZ.$$ We denote $\pi:=\pi_X \circ \pi_Y$. We denote the proper transform of $Y$under blowing up at $X$ to be $\tilde{Y}$ and the blowup of $Z$ at $X$, $\tilde{Z}$. Let $\tilde{E}_X$ be the proper transform of $E_X$ under $\pi_Y$. Then, $E_{W}:=\pi^{-1}(X\cup Y) = \tilde{E}_X  \cup E_Y$.
By \ref{technical}(b), we have the following two facts:
\begin{enumerate}
    \item $\pi_{Y}^{-1}E_X= \tilde{E}_X$
    \item $\pi_X^{-1}(Y) = \tilde{Y}$, which implies  $\pi^{-1}(Y)=\pi_{Y}^{-1}\pi_{X}^{-1}Y= E_Y$
\end{enumerate}

Then using \cite{Ful84} Proposition 4.2(a) we have: 
$$s(X,Z) = \pi_{X*}s(\pi_X^{-1}(X), \tilde{Z}) = \pi_{X*}s(E_X, \tilde{Z})=\pi_{X*}\bigl(c(\mathscr{O}(E_X))^{-1}\cap [E_X]\bigl) = \pi_{X*}\frac{[E_X]}{1+[E_X]}$$
Furthermore, we have: 
$$s(X,Z) = \sum_{d=0}^{dimX} s(X,Z)_d= \sum_{d=0}^{dimX}s_{dimX-d}(N_{X}Z)\cap [X]$$
and on the right-hand side, we have:
$$\pi_*\frac{[E_X]}{1+[E_X]} = \pi_*E_X-\pi_*E_X^2 + \pi_*E_X^3- \dots$$
Since both sides are graded by dimension, we have $\pi_X^*E_X^{dim(Z)-d} = (-1)^{dim(Z)-d+1}s(X,Z)_{d}.$
Similarly, we also have $\pi_{Y*}E_Y^{dim(Z)-d} = (-1)^{dim(Z)-d+1}s(\tilde{Y},\tilde{Z})_{d}.$ Also,
$$s(W,Z)= \pi_*\frac{[E_{X\cup Y}]}{1+[E_{X\cup Y}]} =  \pi_*\frac{[\tilde{E}_{X}]+[E_{Y}]}{1+[\tilde{E}_{X}]+[E_{Y}]}$$
Then we have:

\begin{align*}
s(W, Z)_d & = (-1)^{dim(Z)-d+1} \pi_*(\tilde{E}_X+E_Y)^{dim(Z)-d} \\
                                & = (-1)^{dim(Z)-d+1}  \sum_{i=0}^{dim(Z)-d} \binom{dim(Z)-d}{i} \pi_*(E_Y^{dim(Z)-d-i} \tilde{E}_X^i) \\
                                & =s(X,Z)_{d}+s(Y,Z)_d+  (-1)^{dim(Z)-d+1}  \sum_{i=1}^{dim(Z)-d-1} \binom{dim(Z)-d}{i} \pi_*(E_Y^{dim(Z)-d-i} \tilde{E}_X^i)
\end{align*}

We want to show that $ \pi_*(E_Y^{dim(Z)-d-i} \tilde{E}_X^i) = \pi_*E_Y^{dim(Z)-d-i}\pi_*\tilde{E}_X^{i}$.
Note that $\tilde{E}_X = \pi_Y^{-1}(E_X)$. By the projection formula:
$\pi_*(E_Y^{dim(Z)-d-i}\tilde{E}_X^i) = \pi_{X*}(\pi_{Y*}(E_Y^{dim(Z)-d-i})E_X^i).$
Since $\tilde{Y}\hookrightarrow \tilde{Z}$ is regularly embedded,
\begin{align*}
\pi_{Y*}E^{dim(Z)-d-i}_Y &= (-1)^{dim(Z)-d-i+1}s(\tilde{Y}, \tilde{Z})_{d+i}\\
&= (-1)^{dim(Z)-d-i+1}\left\{\frac{[\tilde{Y}]}{c(N_{\tilde{Y}}\tilde{Z})}\right\}_{d+i}\\
&= (-1)^{dim(Z)-d-i+1} s_{dimY-d-i}(N_{\tilde{Y}}\tilde{Z})\cap [\tilde{Y}].
\end{align*}
By \ref{technical}, $N_{\tilde{Y}}\tilde{Z} \simeq \pi^*_X N_{Y}Z$. 
Then,
\begin{align*}
\pi_*(E_Y^{dim(Z)-d-i}\tilde{E}_X^i) &=\pi_{X*}\left((-1)^{dim(Z)-d-i+1}\left\{\frac{[\tilde{Y}]}{c(N_{\tilde{Y}}\tilde{Z})}\right\}_{d+i}\cdot E_X^i\right) \\
&=(-1)^{dim(Z)-d-i+1}s_{dim(Y)-d-i}(N_{Y}Z)\cdot \pi_{X*}(E_X^i \cap \tilde{Y}).  
\end{align*}
Note that by the universal property of blowup, $E_X\cap \tilde{Y}$ is the exceptional divisor of the blowup $Bl_{X\cap Y}Y,$ and since $E_X^i \cap \_ : A_*(\tilde{Z})\rightarrow A_{*-i}(E)$. Then, $(E_X^i \cap \_)|_{\tilde{Y}}: A_{*}(\tilde{Y})\rightarrow A_{*-i}(E_X\cap \tilde{Y})$ is the same as $(E\cap \tilde{Y})^i \cap \_ $ . Therefore,
$$\pi_X*(E^i_X\cap \tilde{Y}) = (-1)^{i+1}s(X\cap Y, Y)_{dim(Y)-i}.$$
Hence, we have the above:
\begin{align*}
\pi_*(E_Y^{dim(Z)-d-i}\tilde{E}_X^i) & =(-1)^{dim(Z)-d}s_{dim(Y)-d-i}(N_{Y}Z)\cdot s(X\cap Y, Y)_{dimY-i}\\
&=(-1)^{dim(Z)-d}s_{dim(Y)-d-i}(N_{Y}Z)\cdot s_{dim(X)+i-dim(Z)}(N_{X}Z)\cap [X\cap Y].
\end{align*}
Hence $s(W, Z)_d$ is:
$$s(X,Z)_d+s(Y,Z)_d-\sum_{i=1}^{dim(Z)-d-1} \binom{dim(Z)-d}{i} s_{dim(Y)-d-i}(N_YZ)s_{dim(X)+i-dim(Z)}(N_XZ)\cap[X\cap Y].$$
\end{proof}
\begin{remark}
For the case when $X=D$ is a divisor.
\newline Note that $s(D,Z)_{d} = \{\frac{D}{1+D}\}_d = -(1)^{dim(Z)-d+1}D^{dim(Z)-d}$. Let $W$ be the scheme defined by the product of ideal sheaves of $D$ and $Y$. Then,
$$ s(W,Z)_d =s(D,Z)_d+\sum_{i=0}^{dim(Z)-d-1} \binom{dim(Z)-d}{i} (-D)^is(Y,Z)_{d+i}.$$
This is exactly Fulton's Residual Intersection formula \cite{Ful84} Proposition 9.2.
\end{remark}
We want to generalize our previous formula to when there are several components, all intersect transversely. Here we will see that our $\odot$ notation will simplify the notation quite a bit. We will first prove that this notation is indeed well defined.

\begin{lem}
Let $\alpha_i:= s(E_i)$ and $\alpha _i^j =s_j(E_i) $, then,
$$\{\alpha_1\odot\dots \odot \alpha_n \}^k= \sum_{\substack{i_1+\dots i_n = k \\ i_j\geq 1}}\binom{k}{i_1,\dots,i_n}\alpha_1^{i_1-rank(E_i)}\alpha_2^{i_2-rank(E_2)}\dots \alpha_n^{i_n-rank(E_n)},$$
where $\binom{k}{i_1,\dots , i_n}$ is the multinomial coefficient and  $\alpha_{j(dim(Z)-i_j)}$ denotes the dimension $(dim(Z)-i_j)$-th term of the class $\alpha_j$. 

\end{lem}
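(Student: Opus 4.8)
The plan is to prove the identity by induction on $n$, after first making precise what the iterated product $\alpha_1\odot\cdots\odot\alpha_n$ should mean. The binary operation $\odot$ is naturally defined on any formal sum $\beta=\sum_{i\geq 1}\beta^i$ of operators $\beta^i\colon A_*\to A_{*-i}$ by $\{\beta\odot\gamma\}^k=\sum_{i+j=k,\ i,j\geq 1}\binom{k}{i}\beta^i\gamma^j$, and to a vector bundle $E$ of rank $r$ we attach the sum whose $i$-th component is $s_{i-r}(E)$ (this vanishes automatically for $1\leq i<r$ since $s_{<0}(E)=0$). The content of the lemma is then twofold: that $\odot$ is \emph{associative}, so that the $n$-fold product does not depend on the bracketing, and that it has the stated closed form. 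The base case $n=2$ is exactly the defining formula for $s(E)\odot s(F)$, so it suffices to treat $n\geq 3$.

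For the inductive step I would write $\alpha_1\odot\cdots\odot\alpha_n=(\alpha_1\odot\cdots\odot\alpha_{n-1})\odot\alpha_n$ and expand once with the binary definition:
\[
\{\alpha_1\odot\cdots\odot\alpha_n\}^k=\sum_{\substack{m+i_n=k\\ m,i_n\geq 1}}\binom{k}{i_n}\{\alpha_1\odot\cdots\odot\alpha_{n-1}\}^m\,\alpha_n^{i_n}.
\]
Substituting the inductive hypothesis for $\{\alpha_1\odot\cdots\odot\alpha_{n-1}\}^m$ turns the right-hand side into a double sum over $i_1+\cdots+i_{n-1}=m$ and $m+i_n=k$, with coefficient $\binom{k}{i_n}\binom{m}{i_1,\dots,i_{n-1}}$. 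The combinatorial heart of the argument is the elementary identity
\[
\binom{k}{i_n}\binom{m}{i_1,\dots,i_{n-1}}=\binom{k}{i_1,\dots,i_{n-1},i_n}\qquad(m=i_1+\cdots+i_{n-1},\ m+i_n=k),
\]
which follows at once from writing both sides as ratios of factorials. After this substitution the two summations collapse into a single sum over compositions $i_1+\cdots+i_n=k$ with all $i_j\geq 1$; since the operators $\alpha_j^{i_j}$ are polynomials in Chern classes and hence commute with one another, the order of the product is immaterial, and recalling $\alpha_j^{i_j}=s_{i_j-rank(E_j)}(E_j)$ gives precisely the asserted formula. Finally, running the same computation with any other bracketing yields the same symmetric expression, which shows $\odot$ is associative (and commutative, since $\binom{k}{i}=\binom{k}{k-i}$), so that the notation $\alpha_1\odot\cdots\odot\alpha_n$ is well defined.

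I do not expect a serious obstacle here: the proof is a bookkeeping induction. The two points that require a little care are (i) extending $\odot$ from Segre classes of single bundles to the mixed objects that already appear after one application, so that the induction can be stated at all; and (ii) keeping the index ranges $i_j\geq 1$ consistent throughout, noting that enlarging a range from $i_j\geq rank(E_j)$ to $i_j\geq 1$ only introduces terms $s_{<0}(E_j)=0$, so the conventions agree and no boundary terms are lost or created spuriously.
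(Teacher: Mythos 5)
Your proposal is correct and follows essentially the same route as the paper: induction on $n$, peeling off the last factor with the binary definition and collapsing the coefficients via the identity $\binom{k}{i_{n}}\binom{p}{i_1,\dots,i_{n-1}}=\binom{k}{i_1,\dots,i_n}$. Your extra remarks on associativity and on the vanishing of $s_{<0}(E_j)$ are sensible housekeeping but do not change the argument.
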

Note that for the case $n=2$, this is consistent with our definition because the binomial coefficient $\binom{k}{i}$ is the same as the multinomial coefficient $\binom{k}{i, k-i}$. 
\begin{proof}

Assume the formula holds for dotting $n$ times, then for the $(n+1)$th times:
\begin{align*}\{\alpha_1 \odot \dots \alpha_n \odot \alpha_{n+1}\}^k &= \sum_{\substack{p+i_{n+1}=k\\p,i_{n+1} \geq 1}} \binom{k}{i_{n+1}}\{\alpha_1\odot\dots \odot\alpha_n\}^{p}\alpha_{n+1}^{i_{n+1}-rank(E_{n+1})}\\
= \sum_{\substack{p+i_{n+1}=k\\p,i_{n+1} \geq 1}} &\binom{k}{i_{n+1}}\sum_{\substack{i_1+\dots i_n =p \\ i_j\geq 1}}\binom{p}{i_1,\dots,i_n}\alpha_{1}^{i_1-rank(E_1)}\dots \alpha_{n}^{i_n-rank(E_n)}\alpha_{n+1}^{i_{n+1}-rank(E_{n+1})}
\end{align*}
Note that $\binom{k}{i_{n+1}}\binom{p}{i_1, \dots, i_n}= \frac{k!}{i_{n}!p!}\frac{p!}{i_1!\dots i_{n}!} = \frac{k!}{i_1!\dots i_n!i_{n+1}!}=\binom{k}{i_1,\dots,i_{n+1}}$
Then the equality becomes:
$$=\sum_{\substack{i_1+\dots i_{n+1} =k \\ i_j\geq 1}}\binom{k}{i_1,\dots,i_n}\alpha_{1}^{i_1-rank(E_1)}\dots \alpha_{n}^{i_n-rank(E_n)}\alpha_{n+1}^{i_{n+1}-rank(E_{n+1})}$$
By induction, the formula holds.


\end{proof}

\begin{cor}\label{many}
Let $Y_1, Y_2, \dots, Y_n$ be regularly embedded subschemes of a scheme $Z$ with ideal sheaves $\mathscr{I}_1, \mathscr{I}_2, \dots,  \mathscr{I}_n $. Furthermore, $Y_1, Y_2, \dots, Y_n$ intersect transversely in $Z$. Let $W$ be supported on $\bigcup_{i=1}^{k}Y_i$, defined by the ideal $\mathscr{I}_1\cdot \mathscr{I}_2 \dots \mathscr{I}_n $, then:
$$s(W,Z)_d = \sum_{j=1}^{n}(-1)^{j+1}\sum_{\substack{i_1,\dots, i_j \in N\\ i_p \neq i_q}}\left\{\bigodot_{k=1}^js(N_{Y_{i_k}}Z)\}  \cap [\bigcap_{i_{k}\in \{i_{1},i_{2}\cdots,i_{j}\}} Y_{i_k}]\right\}_{d} $$
\end{cor}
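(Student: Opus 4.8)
The plan is to run the blowup argument from the proof of Theorem~\ref{theorem1}, now with $n$ components at once. Consider $\pi\colon \tilde Z=Bl_{W}Z\to Z$. Since $\mathscr{I}_{W}=\mathscr{I}_{1}\cdots\mathscr{I}_{n}$, the exceptional Cartier divisor is a sum $E=E_{1}+\dots+E_{n}$, where $E_{i}$ is the divisor on $\tilde Z$ cut out by $\mathscr{I}_{i}\mathscr{O}_{\tilde Z}$ (equivalently, the pullback of the exceptional divisor of $Bl_{Y_{i}}Z$; each $\mathscr{I}_{i}\mathscr{O}_{\tilde Z}$ is invertible because, as in the proof of Theorem~\ref{theorem1}, $\tilde Z$ arises as an iterated blowup along proper transforms). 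Exactly as in the proof of Theorem~\ref{theorem1}, \cite{Ful84} Proposition 4.2(a) and the formula for the Segre class of a Cartier divisor give
$$s(W,Z)=\pi_{*}\!\left(\frac{[E]}{1+[E]}\right)=\sum_{m\geq 1}(-1)^{m+1}\,\pi_{*}\!\bigl((E_{1}+\dots+E_{n})^{m}\cap[\tilde Z]\bigr).$$
Expanding multinomially and grouping the monomials $\prod_{i}E_{i}^{a_{i}}$ according to the set $S=\{i:a_{i}>0\}$ of indices occurring, the problem reduces to evaluating $\pi_{*}\bigl(\prod_{i\in S}E_{i}^{a_{i}}\cap[\tilde Z]\bigr)$ for a fixed $S=\{i_{1},\dots,i_{j}\}$ and exponents $a_{i_{k}}\geq 1$ with $\sum_{k}a_{i_{k}}=m$.

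The key input is a multi-component version of Lemma~\ref{technical}: if $Y_{1},\dots,Y_{n}$ are regularly embedded and intersect transversely in $Z$, then after blowing up one of them, say $Y_{n}$, the proper transforms $\tilde Y_{1},\dots,\tilde Y_{n-1}$ are again regularly embedded and intersect transversely in $Bl_{Y_{n}}Z$, with $N_{\tilde Y_{i}}(Bl_{Y_{n}}Z)=\pi^{*}N_{Y_{i}}Z$, $\pi^{-1}(Y_{i})=\tilde Y_{i}$, and $\bigcap_{i\in S'}\tilde Y_{i}=\widetilde{Y_{S'}}$ (the proper transform of $Y_{S'}$) for every $S'\subseteq\{1,\dots,n-1\}$. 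The statements involving a single $\tilde Y_{i}$ are Lemma~\ref{technical} for the pair $(Y_{i},Y_{n})$; the identity $\bigcap_{i\in S'}\tilde Y_{i}=\widetilde{Y_{S'}}$ is the same local computation with regular sequences as in the proof of Lemma~\ref{technical} (locally $\mathscr{I}_{\tilde Y_{i}}$ is generated by the untouched equations of $Y_{i}$, and transversality makes the union of these over $i\in S'$ a regular sequence generating $\mathscr{I}_{\widetilde{Y_{S'}}}$); transversality of the $\tilde Y_{i}$ then follows formally, since $N_{\bigcap_{i\in S'}\tilde Y_{i}}(Bl_{Y_{n}}Z)=N_{\widetilde{Y_{S'}}}(Bl_{Y_{n}}Z)=\pi^{*}N_{Y_{S'}}Z=\pi^{*}\bigl(\bigoplus_{i\in S'}N_{Y_{i}}Z\bigr)|=\bigoplus_{i\in S'}N_{\tilde Y_{i}}(Bl_{Y_{n}}Z)|$, using Lemma~\ref{technical}(a) for the transverse pair $(Y_{S'},Y_{n})$. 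Iterating, for any $S$ the blowup $Bl_{\prod_{i\in S}\mathscr{I}_{i}}Z$ is the iterated blowup along the successive proper transforms $\tilde Y_{i_{1}},\dots,\tilde Y_{i_{j}}$ (via the identification $Bl_{\mathscr{I}\mathscr{J}}Z=Bl_{\mathscr{J}\mathscr{O}}(Bl_{\mathscr{I}}Z)$ used in the proof of Theorem~\ref{theorem1}), all proper transforms remaining regularly embedded, transverse, and with pulled-back normal bundles at every stage.

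With this in hand the core computation parallels the $j=2$ case of the proof of Theorem~\ref{theorem1}. By the projection formula $\pi_{*}\bigl(\prod_{i\in S}E_{i}^{a_{i}}\cap[\tilde Z]\bigr)$ is computed on $Bl_{\prod_{i\in S}\mathscr{I}_{i}}Z$ (the $E_{i}$ with $i\notin S$ do not appear, so one factors $\pi$ through this smaller blowup and uses $\sigma_{*}[\tilde Z]=[Bl_{\prod_{i\in S}\mathscr{I}_{i}}Z]$), and there the structure map to $Z$ factors as a composition $p_{1}\circ\dots\circ p_{j}$ of blowups along the successive proper transforms, with $E_{i_{k}}$ the total transform of the exceptional divisor of $p_{k}$. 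Pushing forward one $p_{k}$ at a time: the top power of the exceptional divisor of $p_{k}$ contributes a factor $(-1)^{a_{i_{k}}+1}$ together with a Segre term, the normal-bundle pullback identity lets the remaining exceptional divisors pass through the projection formula, the restriction of the exceptional divisor of $p_{k}$ to the relevant proper transform is the exceptional divisor of a blowup of a $Y_{\{i_{1},\dots,i_{k}\}}$-type scheme, and transversality identifies the corresponding normal bundle with a restriction of $N_{Y_{i_{k}}}Z$ via the Chern-class identity $c(N_{A\cap B}A)=c((N_{B}Z)|_{A\cap B})$ (already used implicitly in the proof of Theorem~\ref{theorem1}). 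After all $j$ steps this yields
$$\pi_{*}\Bigl(\prod_{k=1}^{j}E_{i_{k}}^{a_{i_{k}}}\cap[\tilde Z]\Bigr)=(-1)^{m+j}\Bigl(\prod_{k=1}^{j}s_{\,a_{i_{k}}-\operatorname{rank}N_{Y_{i_{k}}}Z}(N_{Y_{i_{k}}}Z)\Bigr)\cap[Y_{S}],$$
generalizing the displayed formula for $\pi_{*}(E_{Y}^{\dim Z-d-i}\tilde E_{X}^{i})$ in the proof of Theorem~\ref{theorem1}.

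Finally one assembles: substituting into the multinomial expansion, the signs combine as $(-1)^{m+1}(-1)^{m+j}=(-1)^{j+1}$, and for each fixed $S$ the sum over exponent tuples $(a_{i_{k}})$ with $a_{i_{k}}\geq 1$ weighted by multinomial coefficients is precisely $\sum_{m}\{s(N_{Y_{i_{1}}}Z)\odot\dots\odot s(N_{Y_{i_{j}}}Z)\}^{m}=\bigodot_{k=1}^{j}s(N_{Y_{i_{k}}}Z)$, by the lemma computing $\{\alpha_{1}\odot\dots\odot\alpha_{n}\}^{k}$. Hence $s(W,Z)=\sum_{j=1}^{n}(-1)^{j+1}\sum_{|S|=j}\bigl(\bigodot_{i\in S}s(N_{Y_{i}}Z)\bigr)\cap[Y_{S}]$, which is the asserted formula — the $|S|=1$ terms giving $\sum_{i}s(Y_{i},Z)$, the $|S|=2$ terms recovering Theorem~\ref{theorem1} — and grading by dimension gives the expression for $s(W,Z)_{d}$. (An alternative is induction on $n$: blow up $Y_{n}$ and apply Corollary~\ref{alufresidual} to the Cartier divisor $E_{n}$ and its residual $\widetilde{W}_{n-1}$, then push forward; but this still needs the same iterated-blowup analysis to control the pushforward of the $\mathscr{O}(E_{n})$-twisted classes.) I expect the main obstacle to be the bookkeeping of the multi-component lemma through the iterated blowup — keeping the whole collection of proper transforms transverse, identifying intersections of proper transforms with proper transforms of intersections, and tracking how the successive exceptional divisors restrict — since this is exactly what legitimizes the $j$-fold peeling; the remainder is a direct, if lengthy, combination of the proof of Theorem~\ref{theorem1} with the already-established multinomial identity for $\odot$.
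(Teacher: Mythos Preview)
Your proposal is correct and follows essentially the same approach as the paper: expand $(-1)^{m+1}\pi_{*}(E_{1}+\dots+E_{n})^{m}$ multinomially, use the iterated-blowup structure together with Lemma~\ref{technical} to push forward each monomial $\prod E_{i_{k}}^{a_{i_{k}}}$ one stage at a time via the projection formula and the pulled-back normal bundles, and then identify the resulting sums with the $\odot$ operation via the preceding multinomial lemma. Your organization by the support set $S$ and your explicit statement of the multi-component analogue of Lemma~\ref{technical} (preservation of transversality, $\cap$ of proper transforms $=$ proper transform of $\cap$) are somewhat cleaner than the paper's ``distributive property'' induction, but the underlying computation is identical.
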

\begin{proof}
Consider the blowup $\pi: Bl_{W}Z \rightarrow  Z.$ Because the ideal sheaf of $W$ is $\mathscr{I}_1\cdot \mathscr{I}_2\dots \mathscr{I}_n$, it is sufficient to fix a blowup sequence, say:
$$Bl_{W}Z=Bl_{\tilde{Y}^{(n-1)}_n}\tilde{Z}^{(n-1)}\xrightarrow{\pi_n}Bl_{\tilde{Y}^{(n-2)}_{n-1}}\tilde{Z}^{(n-2)}\xrightarrow{\pi_{n-1}}\dots Bl_{\tilde{Y_2}^{(1)}}\tilde{Z}^{(1)}\xrightarrow{\pi_1}Bl_{Y_1}{Z},$$
where $\pi = \pi_1\circ\dots \circ\pi_n$. Here we adopt the notation that $\tilde{Z}^{(i)}$ to be the proper transform of $Z$ at the $i$-th blowup, similarly for $\tilde{Y}_i^{(j)}$. 
Then applying  \ref{technical}, we have $\tilde{E}_i :=\pi^{-1}(Y_i) = \tilde{Y}_i^{(n-1)}$ for $i\neq n$ and $E_n =  \pi^{-1}(Y_n)$ are Cartier divisors, and the exceptional divisor of $Bl_{W}Z $ is
$E = \tilde{E}_1+\tilde{E}_2+\dots + E_n$. 
We claim that the pushforward is still the same, $\pi_*(\tilde{E}_i)^k = (-1)^{k+1}s(Y_i,Z)_{dim(Z)-k}.$ 
This is because we have three situations for the pushforward. First,  for all $j> i$, $\tilde{Y}_i^{(j)}$ are Cartier divisors and $\pi_{j*} \tilde{Y}_i^{(j)} = \tilde{Y}_i^{(j-1)}$ by \ref{technical} (b) in the previous proof. Second, For $j=i$, we have $\pi_{j*}\tilde{Y}_{i}^{(j)} = -s(\tilde{Y}_{i}^{(j-1)}, \tilde{Z}^{(j-1)})_{dim(Z)-1} $. Third, for $j < i$, we have $\pi_{j*}[\tilde{Y}_i^{(j)}] = [\tilde{Y}_i^{(j-1)}]$ by fact 1 in the previous proof, then $\pi_{j*}s(\tilde{Y}_{i}^{(j)}, \tilde{Z}^{(j)}) = \pi_{j*}\frac{[\tilde{Y}_{i}^{(j)}]}{c(\pi_j^*N_{\tilde{Y}_{i}^{(j-1)}}\tilde{Z}^{(j-1)})}= \frac{[\tilde{Y}_{i}^{(j-1)}]}{c(N_{\tilde{Y}_{i}^{(j-1)}}\tilde{Z}^{(j-1)})} = s(\tilde{Y}_{i}^{(j-1)}, \tilde{Z}^{(j-1)})$. Together with all three cases, we proved our claim.
Hence applying the Multinomial theorem gives us the following:
$$\sum_{\substack{i_1+\dots i_n = k\\ i_j\geq 0}} \binom{k}{i_1, \dots, i_n}\pi_*(E_1^{i_1}\dots E_n^{i_n}) $$
Note that we want to show the following distributive property: $$\pi_*(\tilde{E}_1^{i_1}\tilde{E}_2^{i_2}\dots E_{n}^{i_n}) = \pi_*(\tilde{E}_1)^{i_1}\pi_*(\tilde{E}_2)^{i_2}\dots \pi_*(E_n)^{i_n}.$$ Note that $\pi$ is given by composition of blowup map $\pi_1 \circ \dots\circ \pi_n$, where $\pi_i$ is the blowup at $\tilde{Y}_i^{(i-1)}$ . 
By induction, the base case is proven in the previous theorem. Assume that the above distributive property holds for product $n=k$,. Then for $n=k+1$, we have:
$$\pi_*(\tilde{E}_1^{i_1}\tilde{E}_2^{i_2}\dots E_{k+1}^{i_{k+1}})= (\pi_{1}\circ \dots \circ \pi_{k})_*((E_1^{'i_1}\dots  E_{k}^{'i_{k}})\pi_{k+1*}E_{k+1}^{i_{k+1}})$$
$$=(\pi_{1}\circ \dots \circ \pi_{k})_*((E_1^{'i_1}\dots  E_{k}^{'i_{n-1}})(-1)^{i_{k+1}+1}s(\tilde{Y}^{(k)}_{k+1}, \tilde{Z}^{(k)})_{dim(Z)-i_n}) $$

where $E_{j}^{'i_j}= \pi_{k+1}^*(\tilde{E}_j^{i_j})$, and note that $ (\pi_{1}\circ \dots \circ \pi_{n-1})^*(N_{Y_{k+1}}Z)= N_{\tilde{Y}^{(k)}_{k+1}} \tilde{Z}^{(k)}$.
Then we have the above is:
$$=(\pi_{1}\circ \dots \circ \pi_{k})_*((E_1^{'i_1}\dots  E_{k}^{'i_{k}})\cap [\tilde{Y}^{(k)}_{k+1}])(-1)^{i_{k+1}+1}s_{i_{k+1}+dimY_{k+1} - dim(Z)}(N_{Y_{k+1}}Z)$$
By the induction hypothesis,
$$= (-1)^{i_1+1}s_{i_{k+1}+dimY_{k+1} - dim(Z)}(N_{Y_{k+1}}Z)\dots (-1)^{i_{k+1}+1}s_{i_{k+1}+dimY_{k+1} - dim(Z)}(N_{Y_{k+1}}Z) \cap [\cap_{a=1}^{k+1}Y_a]$$
$$= \pi_*(\tilde{E}_1)^{i_1}\pi_*(\tilde{E}_2)^{i_2}\dots \pi_*(E_{k+1})^{i_{k+1}}$$
Then, all we need to deal with is the sign of the pushforward. The sign of the pushforward only depends on the number of $\pi_*(E_i)$ appears in the monomial. Let $N = \{1,2,\dots, n\}$, we have the following formula:
$$\sum_{i=1}^n s_{dim(Y_i)-d}(N_{Y_i}Z)\cap [Y_i] - \sum_{\substack{i_1,i_2\in N\\ i_1\neq i_2}}\{s(N_{Y_{i_1}}Z)\odot s(N_{Y_{i_2}}Z)\}^{dim(Y_{i_1}\cap Y_{i_2})-d}\cap[Y_{i_1}\cap Y_{i_2}] + \cdots$$
$$+(-1)^{n+1}\{\bigodot_{i=1}^k s(N_{Y_{i}},Z))\}^{dim(\cap_{j=1}^n Y_{i_j})-d}\cap [\bigcap_{j=1}^n Y_{i_j}].$$
\end{proof}
Now we want to show that our transversality condition indeed implies that the ideal sheaf of the union is the product of the ideal sheaf of each component. 
\begin{lem}\label{product lemma}
Suppose $Y_{1}, \dots, Y_n $ that are reduced subvarieties in a reduced scheme $Z$, intersect transversely in $Z$ , and are regularly embedded in $Z$. Furthermore, for any subsets $S\subseteq \{1,2,\dots ,n\}$, $Y_{S}=\bigcap_{i}Y_i$ is regularly embedded in $Z$ , and let $\mathscr{I}_i$ be the ideal sheaf for $Y_i$. Then the scheme $W$ defined by the ideal sheaf $\mathscr{I}_1 \cdot\mathscr{I}_2\dots \mathscr{I}_n$ is reduced. 
\end{lem}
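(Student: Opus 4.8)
The argument will be local, and the crux is that transversality makes the local generators of $\mathscr{I}_1,\dots,\mathscr{I}_n$ concatenate into a single regular sequence. Since being reduced is a local property, fix a closed point $p\in Z$ and put $A=\mathscr{O}_{Z,p}$, a Noetherian local ring. Discarding the $Y_i$ that do not pass through $p$ (this only removes unit factors from $\mathscr{I}_1\cdots\mathscr{I}_n$ near $p$) and relabelling, we may assume $p\in Y_i$ for every remaining $i$. Write $I_i=\mathscr{I}_{i,p}$, let $c_i=\mathrm{codim}_Z Y_i=\mathrm{rank}\,N_{Y_i}Z$, and choose a generating set $f_{i,1},\dots,f_{i,c_i}$ of $I_i$ with exactly $c_i$ elements (possible since $Y_i$ is regularly embedded). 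With $S=\{1,\dots,n\}$, the scheme $Y_S$ is regularly embedded and transversality gives $\mathrm{rank}\,N_{Y_S}Z=\sum_i\mathrm{rank}\,N_{Y_i}Z=\sum_i c_i$, so $I_{Y_S,p}=\sum_i I_i$ is locally generated by a regular sequence of length $\sum_i c_i$; in particular $\mathrm{grade}(I_{Y_S,p})=\sum_i c_i$. Since $\{f_{i,j}\}_{i,j}$ is also a generating set of $I_{Y_S,p}$ with exactly $\sum_i c_i$ elements, the standard criterion (an ideal of grade $g$ generated by $g$ elements is generated by those elements, in any order, as a regular sequence) shows $\{f_{i,j}\}_{i,j}$ is a regular sequence in $A$; hence so is any subfamily, in particular any union of the blocks $\{f_{i,j}\}_j$.

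The heart of the proof is the local identity $\mathscr{I}_1\cdots\mathscr{I}_n=\mathscr{I}_1\cap\dots\cap\mathscr{I}_n$. Granting it, $\mathscr{O}_{W,p}=A/\bigcap_i I_i$ embeds into $\prod_i A/I_i=\prod_i\mathscr{O}_{Y_i,p}$, a finite product of reduced rings (each $Y_i$ is reduced), so $\mathscr{O}_{W,p}$ is reduced, which is the lemma. I would prove the identity by induction on $n$, the case $n=1$ being trivial. For the step, set $I_{W'}=I_2\cdots I_n$, which by induction equals $I_2\cap\dots\cap I_n$; using $\mathrm{Tor}_1^A(A/I_1,A/I_{W'})\cong(I_1\cap I_{W'})/(I_1 I_{W'})$, it is enough to show this Tor vanishes. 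As $A/I_1$ is resolved by the Koszul complex on $f_{1,1},\dots,f_{1,c_1}$, the Tor is the Koszul homology of that sequence over $A/I_{W'}$, so it suffices to prove $f_{1,1},\dots,f_{1,c_1}$ is a regular sequence on $A/(I_2\cap\dots\cap I_n)$. Now that sequence is regular on each $A/I_k$ ($k\ge2$), since $\{f_{1,j}\}_j\cup\{f_{k,j}\}_j$ is a regular sequence in $A$; to pass from the individual quotients to their intersection I would induct on $c_1$, using two elementary facts: an element that is a non-zerodivisor on every $A/I_k$ is a non-zerodivisor on $A/\bigcap_k I_k$ (immediate from annihilators), and once $f_{1,1}$ is a non-zerodivisor on $A/\bigcap_k I_k$ the vanishing of $\mathrm{Tor}_1(A/\bigcap_k I_k,A/(f_{1,1}))$ gives $(\bigcap_k I_k)+(f_{1,1})=\bigcap_k(I_k+(f_{1,1}))$, which returns us to the same configuration with $c_1$ lowered by one. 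This yields the required regularity, hence the Tor vanishing and the identity.

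The main obstacle is exactly this last passage: upgrading ``$f_{1,\bullet}$ is regular on each $A/I_k$'' to ``$f_{1,\bullet}$ is regular on $A/\bigcap_k I_k$'', since it is the only step where one works with an honest intersection ideal rather than a complete intersection, and the colon and sum identities feeding the inner induction deserve a careful check. Everything else---the codimension bookkeeping, the grade computation, the Koszul-resolution criterion, and the isomorphism $\mathrm{Tor}_1^A(A/I,A/J)\cong(I\cap J)/IJ$---is routine. Note that only reducedness of the $Y_i$ (not of $Z$, nor any smoothness) enters the argument.
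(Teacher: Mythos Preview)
Your argument is sound and reaches the conclusion by a genuinely different route than the paper. Both proofs localize at a closed point and use transversality to show that chosen generators of the $I_i$ concatenate into a single regular sequence in $A=\mathscr{O}_{Z,p}$; your grade argument for this step is cleaner than the paper's explicit choice of overlapping generators. From there the paper restricts to $n=2$ and argues directly that $I_XI_Y$ is radical (leaning implicitly on $I_X\cap I_Y=I_XI_Y$ for ideals generated by complementary blocks of a regular sequence), while you keep general $n$, establish the identity $I_1\cdots I_n=I_1\cap\dots\cap I_n$ via Koszul homology and $\mathrm{Tor}$, and deduce reducedness from the embedding $A/\bigcap_i I_i\hookrightarrow\prod_i A/I_i$. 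Your route makes the role of the regular-sequence hypothesis explicit, yields the intersection--product identity as a byproduct, and shows that reducedness of $Z$ is not actually needed.

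One step in your inner induction needs a real correction rather than just a ``careful check'': the vanishing of $\mathrm{Tor}_1^A\!\bigl(A/\!\bigcap_{k\ge2}I_k,\,A/(f_{1,1})\bigr)$ is equivalent to $f_{1,1}$ being a non-zerodivisor on $A/\bigcap_k I_k$ and gives only $(\bigcap_k I_k)\cap(f_{1,1})=f_{1,1}\cdot\bigcap_k I_k$; it does \emph{not} imply the sum identity $(\bigcap_{k\ge2}I_k)+(f_{1,1})=\bigcap_{k\ge2}\bigl(I_k+(f_{1,1})\bigr)$ that your recursion needs. For two ideals $I,J$ the sum identity $(I\cap J)+(f)=(I+(f))\cap(J+(f))$ follows instead from $f$ being a non-zerodivisor on $A/(I+J)$, and to iterate you need $f_{1,1}$ to be a non-zerodivisor on $A/\sum_{k\in T}I_k$ for every $T\subseteq\{2,\dots,n\}$, which does hold here since each such sum is generated by a sub-block of the global regular sequence. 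Packaging this as a simultaneous induction (on the length of the global regular sequence, carrying both the product--intersection identity and the sum identities) closes the gap, and with that amendment your proof is complete.
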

The motivation behind this lemma is that $X\cup Y$ is defined set-theoretically but not always scheme-theoretically. However, with the condition in the lemma, the union can be defined scheme-theoretically, to be the reduced scheme supported on the set $X\cup Y$,  where we take the scheme given by the product of the ideal sheaves. In this case, and we obtain a more geometric of the statement of \ref{theorem1}, \ref{geometric}. 

\begin{proof}
Note that it is enough to check the case when $n=2$. To simplify the notation, we denote $Y_1$ as $X$ and $Y_2$ as $Y$.
We will check reducedness at the level of the local ring at a closed point, and we only need to check the intersection $X\cap Y$.  Let $p \in X\cap Y$ a closed point, and $A$ be $\mathscr{O}_{Z,p}$. Since $X, Y$ are regularly embedded, the stalks of their ideal sheaves over the closed points can be generated by a regular sequence. 

Let $I_X:= \mathscr{I}_{X,p}$  be generated by a regular sequence $(a_1,a_2, \cdots, a_n)$ of length $n$. Then it is enough to assume $I_Y :=  \mathscr{I}_{Y,p}$ be generated by a regular sequence $ (a_1,a_2\cdots, a_e, b_{e+1}, \cdots b_m)$ of length $m$, where $a_i,b_i\in A$. 

To see why we can make such an assumption, note that $I_{Y}/I_{Y}^2$ and $I_{X}/I_{X}^2$ can be considered as free $A/I_{Y}$-module and $A/I_{X}$-module.  Since $X\cap Y$ is regularly embedded in both $X$ and $Y$, then we can consider both as free $A/I_{X\cap Y}$-submodules of $I_{X\cap Y}/I^2_{X\cap Y}$ after base change. In particular,  $(I_{Y}/I_{Y}^2 \otimes A/I_{X\cap Y})\cap (I_X/I_{X}^2 \otimes A/I_{X\cap Y})$ is also a free submodule of $I_{X\cap Y}/I^2_{X\cap Y}$ . Therefore one can choose a minimal set of generators for this free submodule, say $a_1,a_2,\cdots , a_e$. Then we can complete the minimal set of generators for both $I_{X}/I_{X}^2$ and $I_{Y}/I_{Y}^2$, say $a_1,\cdots, a_e, a_{e+1} \cdots a_e$ and $a_1,\cdots, a_e,b_{e+1},\cdots, b_n$ . These two sets form quasi-regular sequence. Note that by construction, $I_X, I_Y\subseteq rad(A) = m_p$ the maximal ideal associated to the closed point $p$. Then by \cite{mat06} Remark 15.1 ($\alpha$), we have these two sets also form a regular sequence. 

By transversality, we have $\mathscr{N}_{X}Z|_{X\cap Y} \oplus \mathscr{N}_{Y}Z|_{X\cap Y} \simeq \mathscr{N}_{X\cap Y}Z.$ This implies that $\mathscr{I}_X/\mathscr{I}^2_X |_{X\cap Y}\oplus \mathscr{I}_Y/\mathscr{I}^2_Y |_{X\cap Y}  \simeq \mathscr{I}_{X\cap Y}/\mathscr{I}^2_{X \cap Y} .$ Note that $(\mathscr{I}_X/\mathscr{I}^2_X)_{p}$ is isomorphic to $A/I_{X\cap Y}[a_1,a_2,\cdots ,a_n]$ as finitely generated $A/I_{X \cap Y}-$module. 
Then $I_{X\cap Y}/I_{X\cap Y}^2 \simeq A/I_{X\cap Y}[a_1,\cdots, a_n, b_{e+1}, \cdots, b_m]$ and  $rank_{A}(I_{X\cap Y}/I_{X\cap Y}^2)= m+n -e$. However, $ rank_{A}(I_{X}/I_{X}^2\oplus I_{Y}/I_{Y}^2) = m+n.$ Thus, we must have $e =0$.

Then $I_{X}\cdot I_{Y} = (a_ib_j)_{\substack{1\leq d \leq n\\ 1\leq s\leq m}}$ and since $X,Y$ are reduced, $I_X$ and $I_Y$ contain no such element $x$ such that $x^n\in I_X$ or $x^n\in I_X$ for any integer $n$. Then there is no such element $x$ such that $x^n\in I_X\cdot I_Y$  for any integer $n$. Since $Z$ is reduced, then $A$ is reduced. Hence $A/(I_X\cdot I_Y)$ is reduced.
\end{proof}
Applying \ref{product lemma}, this statement of \ref{many} specializes to a more geometric version:
\begin{cor} \label{geometric}
Let $Y_1, Y_2, \dots, Y_n$ be reduced regularly embedded subvarieties of the scheme $Z$, and $Y_1, Y_2, \dots, Y_n$ intersect transversely in $Z$. Then:
$$s(\bigcup_{i=1}^nY_i,Z)_d = \sum_{j=1}^{n}(-1)^{j+1}\sum_{\substack{i_1,\dots, i_j \in N\\ i_p \neq i_q}}\{\bigodot_{k=1}^js(Y_{i_k},Z)\}_d $$
\end{cor}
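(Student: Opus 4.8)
The plan is to deduce this almost immediately from Corollary \ref{many} and Lemma \ref{product lemma}, so that the only real work is bookkeeping. First I would take care of the left-hand side. Lemma \ref{product lemma} applies: the $Y_1,\dots,Y_n$ are reduced, regularly embedded, and intersect transversely, and the transversality hypothesis — phrased via $N_{Y_S/Z}$ as a genuine bundle — forces every multi-intersection $Y_S=\bigcap_{i\in S}Y_i$ to be regularly embedded as well, as is needed to invoke that lemma. Hence the scheme $W$ cut out by $\mathscr{I}_1\cdot\mathscr{I}_2\cdots\mathscr{I}_n$ is reduced; since its support is the set $\bigcup_{i=1}^n Y_i$, it is the reduced induced subscheme on that set, i.e. $W=\bigcup_{i=1}^n Y_i$, and therefore $s\!\left(\bigcup_i Y_i,Z\right)=s(W,Z)$.

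Second, I would unwind the notation on the right-hand side. Because each $Y_{i_k}$ is regularly embedded, $s(Y_{i_k},Z)=s(N_{Y_{i_k}}Z)\cap[Y_{i_k}]$, and by transversality the restriction of $N_{Y_{i_k}}Z$ to any common intersection $\bigcap_l Y_{i_l}$ is a direct summand of the normal bundle of that intersection; this is precisely what makes the residual product $\bigodot_k s(Y_{i_k},Z)$ well defined as a class supported on $\bigcap_k Y_{i_k}$, and by construction it equals $\bigl\{\bigodot_k s(N_{Y_{i_k}}Z)\bigr\}\cap\bigl[\bigcap_k Y_{i_k}\bigr]$. Thus the sum $\sum_{j=1}^n(-1)^{j+1}\sum_{i_1,\dots,i_j}\{\bigodot_{k=1}^j s(Y_{i_k},Z)\}_d$ is, term by term, the right-hand side of Corollary \ref{many} applied to $W$.

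Third, I would simply combine the two observations: apply Corollary \ref{many} to the scheme $W$ defined by $\mathscr{I}_1\cdots\mathscr{I}_n$ to get the formula for $s(W,Z)_d$ in terms of $\bigodot_{k=1}^j s(N_{Y_{i_k}}Z)\cap[\bigcap_k Y_{i_k}]$, then substitute $s(W,Z)=s(\bigcup_i Y_i,Z)$ from the first step and the notational identity from the second step. This produces the stated formula in $A_d(\bigcup_i Y_i)$.

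The main obstacle is not homological — that content is already packaged in Corollary \ref{many} — but lies in the two "obvious" identifications. The more delicate one is verifying that the hypotheses of \ref{geometric} really supply the hypotheses of \ref{product lemma}: namely that $Z$ is reduced and that every $Y_S$ is regularly embedded. These should either be read off from the standing conventions (a \emph{subvariety} of the ambient scheme, and a transversality condition stated through normal bundles) or else added explicitly to the statement. The second point deserving care is pinning down, once and for all, how the operator-valued product $\odot$ is to be applied to honest Segre classes such as $s(Y_{i_k},Z)$ over a common intersection locus, so that the right-hand side of \ref{geometric} is unambiguous; as explained in the second paragraph, this rests squarely on the transversality splitting $N_{Y_S/Z}=\bigoplus_i N_{Y_i/Z}|_{Y_S}$.
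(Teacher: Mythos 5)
Your proposal is correct and is essentially the paper's own argument: the paper presents Corollary \ref{geometric} as an immediate specialization of Corollary \ref{many} via Lemma \ref{product lemma}, exactly as you do, identifying the scheme cut out by $\mathscr{I}_1\cdots\mathscr{I}_n$ with the reduced union. Your two caveats (that the reducedness of $Z$ and the regular embedding of each $Y_S$ must be read off from the transversality convention, and that $\bigodot_k s(Y_{i_k},Z)$ must be interpreted as $\{\bigodot_k s(N_{Y_{i_k}}Z)\}\cap[\bigcap_k Y_{i_k}]$) are accurate and, if anything, more careful than the paper's own treatment.
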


\section{The $\odot$ Notation and $Q$-polynomial}
Fulton's residual intersection \ref{fultonresidual} has a non-dimension specific expression\ref{alufresidual}, using the $\otimes$ operation. The goal for introducing $\odot$ is to give a non-dimension specific expressions for the formulas. This comparison is valid because, in this situation, we can think of the Segre class formally as polynomials whose grading is given by codimension.
Let $X$, $Y, Z,W$ be as in \ref{theorem1} , and let $E$ be the exceptional divisor of blowing up $Y$. The blowup is given by $\pi:Bl_{Y}Z \rightarrow Z$. Then one could try to apply Residual Intersection theorem \ref{alufresidual} $\mathscr{E}:= N_{Y}Z$ and $\mathscr{F}:= N_{X}Z$,  then we have:
$$s(W,Z) = \pi_*(s(E,\tilde{Z})+s(\tilde{X},\tilde{Z})^{\mathscr{O}(E)})$$
$$= s(Y,Z)+\pi_*\frac{[\tilde{X}]}{c(\mathscr{O}(E))c(\pi^*\mathscr{F}\otimes \mathscr{O}(E))}$$

Note that the above operator on $[X]$ is of the following form by the splitting principal and considering Chern roots:
$$\frac{1}{(1+E)(1+a_1+E)(1+a_2+E)\cdots(1+a_n+E)},$$
\begin{defn}
The $Q$-polynomial $Q(a_i,E)$ is defined as the polynomial such that the above expression can be expanded as follows:
$$\frac{1}{(1+a_1)(1+a_2)\cdots(1+a_n)}+ Q(a_1,a_2,\cdots, a_n,E)\cdot E,$$
where $Q$ is a polynomial symmetric with respect to $a_1,a_2,\cdots, a_n$.
\end{defn}
It is safe to let $Q$ take the parameters $e_1,e_2,\cdots e_n, E$,  where $e_i$ is the $n$-th elementary symmetric polynomial with variables $a_1,a_2,\cdots, a_n,$
$$ \frac{[\tilde{X}]}{c(\mathscr{O}(E))c(\pi^*\mathscr{F}\otimes \mathscr{O}(E))}= \frac{[\tilde{X}]}{c(\mathscr{\pi^*F})}+Q(c_i(\pi^*\mathscr{F}), E)\cdot E\cap [\tilde{X}].$$
Then,

$$s(W,Z)  =  s(Y,Z)+s(X,Z)+\pi_*(Q(c_i(\pi^*\mathscr{F}),E)\cdot E\cap [\tilde{X}])$$

By Fulton's Residual formula, we also have the expression in terms of $\odot$:
$$s(\pi^{-1}(W),\tilde{Z})= s(\tilde{X},\tilde{Z})+ s(E,\tilde{Z})-s(\mathscr{O}(E))\odot s(\pi^*\mathscr{F})\cap [E\cap \tilde{X}],$$
in $\odot$ notation.
Then, comparing the two terms supported strictly on $X\cap Y$:
$$\pi_*(Q(c_i(\pi^*\mathscr{F}), E)\cdot E\cap [\tilde{X}]) =-\pi_*(s(\mathscr{O}(E))\odot s(\pi^*\mathscr{F})\cap [E\cap \tilde{X}])$$
$$=-\pi_*\left(\sum_{k=0}^{\infty}\sum_{\substack{i+j=k\\i,j \geq 1}} \binom{k}{i}s_{j-rank(F)}(\mathscr{\pi^*F})\cdot (-1)^{i-1}E^{i-1}\cap [E\cap \tilde{X}]\right)$$
$$=-\sum_{k=0}^{\infty}\sum_{\substack{i+j=k\\i,j \geq 1}} \binom{k}{i}s_{j-rank(F)}(\mathscr{F})\cdot \pi_*\left((-1)^{i-1}E^{i-1}\cap [E\cap \tilde{X}]\right)$$
From the proof of \ref{theorem1}, we know that $\pi_*(E^i\cap \tilde{X}) = s(X\cap Y,X)_{dim(X)-i} = s_{i-(dimX-dimX\cap Y)}(N_{X\cap Y}X)\cap [X\cap Y].$
Hence, the above became:
$$=-\sum_{k=0}^{\infty}\sum_{\substack{i+j=k\\i,j \geq 1}} \binom{k}{i}s_{j-rank(F)}(\mathscr{F})\cdot s_{i-rank(N_{X\cap Y}X)}(N_{X\cap Y}X) \cap [X\cap Y]$$
$$=-s(\mathscr{F})\odot s(N_{X\cap Y}X)\cap[X\cap Y]$$
In the transversal case, we have $s(N_{X\cap Y}X) = s(\mathscr{E}|_{X\cap Y}) $.
This discussion is useful for the proof of \ref{theorem2}.
\vspace{5mm}
\section{Proof of Theorem \ref{theorem2}}
We will prove Theorem \ref{theorem2} by using a variation of Theorem 4.2 \cite{Alu09} which is \ref{chern class of blowup}  in this paper. The proof of said theorem will be presented in Section 6.
\vspace{5mm}
\begin{proof}
By residual intersection formula, we have,
$$s(W,Z) = \pi_*s(E\cup \tilde{X}, \tilde{Z}) = \pi_*(s(E,\tilde{Z}) + s(\tilde{X}, \tilde{Z})^{\mathscr{O}(E)})$$
We see that the only thing we need to compute is $\pi_*(s(\tilde{X}, \tilde{Z})^{\mathscr{O}(E)}).$
Note that $s(\tilde{X},\tilde{Z}) = \frac{1}{c(N_{\tilde{X}}\tilde{Z})}\cap [\tilde{X}]$


By \ref{chern class of blowup}, using the notation for $\Gamma'$, there is:
$$c(N_{\tilde{X}}\tilde{Z})-c(\pi^*N_XZ)= \Gamma' = c(\pi'^*N_{X'}A')\zeta(E',\pi'^*c_i(N_{A'}Z'))\cdot E',$$
where $X', A', Z'$ is defined in the following diagram, 
\[
\begin{tikzcd}
    X\cap Y \arrow[r,"i"]\arrow[d,"j"]& X' = P(N_{X\cap Y}X \oplus 1)\arrow[d]\\
   Y' =  p(N_{X\cap Y} Y \oplus 1) \arrow[r]& A' = P(N_{X\cap Y}X\oplus N_{X\cap Y}Y \oplus 1)\arrow[dr]\\
     & &  Z' = P(N_{X\cap Y}Z \oplus 1)
\end{tikzcd}
\]
Furthermore, $\pi': Bl_{Y'}Z' \rightarrow Z'$ is the blowup map, and $E'$ is the exceptional divisor of the blowup.

Then, we observe that, 
$$s(\tilde{X}, \tilde{Z})^{\mathscr{O}(E)} = \left(\frac{[\tilde{X}]}{c(N_{\tilde{X}}\tilde{Z})}\right)^{\mathscr{O}(E)}= \left(\frac{[\tilde{X}]}{c(\pi^*N_{X}Z)+ \Gamma'}\right)^{\mathscr{O}(E)}$$
Notice that the since $\Gamma$ is supported on $E\cap \tilde{X}$. In the expansion, the only term whose support contains  $\tilde{E}\cap X$ is $\frac{[\tilde{X}]}{c(\pi^*N_XZ)}$.  Denote $\mathbb{R}$ as the residual term that is supported on $\tilde{X}\cap E$ . $\mathbb{R}$ has a factor of $E$ and therefore is supported on $\tilde{X}\cap E.$ More specifically,
$$\mathbb{R}=\pi_*\left(s(\tilde{X}, \tilde{Z})^{\mathscr{O}(E)} -\frac{[\tilde{X}]}{c(\pi^*N_XZ)}\right) = \pi_*\left(s(\tilde{X}, \tilde{Z})^{\mathscr{O}(E)}\right) -s(X,Z)$$

To determine $\mathbb{R}$, it is sufficient to restrict to the support $\tilde{X}\cap E.$ Let $i: \tilde{X}\cap E \hookrightarrow \tilde{X}$.
Then,  
$$\mathbb{R} =\pi_*\left(i^*s(\tilde{X}, \tilde{Z})^{\mathscr{O}(E)} -i^*\left(\frac{[\tilde{X}]}{c(\pi^*N_XZ)}\right)\right)$$
$$=\pi_*\left(i^*s(\tilde{X}, \tilde{Z})^{\mathscr{O}(E)} -\frac{[\tilde{X}\cap E]}{c(\pi^*N_XZ)}\right) $$
Note that $c(\pi^*N_{X}Z|_{\tilde{X}\cap E}) = c(\pi'^*N_{X'}Z'|_{\tilde{X}\cap E})$, then,
$$c(N_{\tilde{X}}\tilde{Z}|_{\tilde{X}\cap E}) = c(N_{\tilde{X'}}\tilde{Z}'|_{\tilde{X}\cap E})= \Gamma'|_{\tilde{X}\cap E}+c(\pi'^*N_{X'}Z'|_{\tilde{X}\cap E})$$

By the definition of $\Gamma'$, this is simply:
$$c(\pi^*N_{X'}A'|_{\tilde{X}\cap E})c((\pi^*N_{A'}Z'\otimes \mathscr{O}(-E'))|_{\tilde{X}\cap E})$$

Furthermore, we have $N_{X'}A'|_{X\cap Y} = \frac{N_{X\cap Y}X\oplus N_{X\cap Y} Y}{N_{X\cap Y}X}= N_{X\cap Y}Y$,  $N_{A'}Z'|_{X\cap Y} = \frac{N_{X\cap Y}Z}{N_{X\cap Y}X\oplus N_{X\cap Y} Y}$
Then, putting this together, we have:

$$c(N_{\tilde{X}}\tilde{Z}|_{\tilde{X}\cap E})=c(\pi^{*}N_{X\cap Y}Y)c((\pi^{*}\frac{N_{X\cap Y }Z}{N_{X\cap Y}X \oplus N_{X\cap Y}Y}\otimes \mathscr{O}(-E))|_{\tilde{X}\cap E})$$

Assume that all classes are restricted to the support $\tilde{X}\cap E$. To simplify the computation, we will omit indicating the restriction of the normal bundle within the Chern class. 
\begin{align*}
\pi_*\left(i^*s(\tilde{X}, \tilde{Z})^{\mathscr{O}(E)}\right)&=\frac{c(N_{X\cap Y}X) c(N_{X\cap Y}Y)}{c(N_{X\cap Y}Z)}\pi_*\left(\frac{1}{(1+E)c\left(\pi^{*}N_{X\cap Y}Y\otimes \mathscr{O}(E)\right)}\cap [\tilde{X}]\right)\\
& =\frac{c(N_{X\cap Y}X) c(N_{X\cap Y}Y)}{c(N_{X\cap Y}Z)}\pi_*\left (\frac{[\tilde{X}]}{c(\pi^{*}N_{X\cap Y}Y)}+  Q(c_i(\pi^{*}N_{X\cap Y}Y), E)\cdot E\cap [\tilde{X}]\right),
\end{align*}

where $Q$ is the polynomial described in Section 3. Note that $\frac{c(N_{X\cap Y}X) c(N_{X\cap Y}Y)}{c(N_{X\cap Y}Z)}\cdot\frac{1}{c(N_{X\cap Y}Y)} =\frac{1}{c(N_{X}Z|_{X\cap Y})}$ .
Since $E, \tilde{X}$ are residual to each other, then we have $[\tilde{X}\cap E] = [\tilde{X}]\cdot E$. By universal property, we also have that $\pi_*([\tilde{X}\cap E]) = [X\cap Y]$.  Then, by previous discussion of the $Q$ polynomial in Section 3, 
$$\mathbb{R} = \frac{c(N_{X\cap Y}X) c(N_{X\cap Y}Y)}{c(N_{X\cap Y}Z)}\pi_*(Q(c_i(\pi^{*}N_{X\cap Y}Y), E)\cap [E\cap \tilde{X}])$$$$ = -\frac{c(N_{X\cap Y}X) c(N_{X\cap Y}Y)}{c(N_{X\cap Y}Z)}\left(s(N_{X\cap Y}X) \odot s(N_{X\cap Y}Y)\right)\cap [X\cap Y]$$

Putting it all together, we have the formula:
$$s(W,Z) = s(X,Z)+s(Y,Z) -\frac{c(N_{X\cap Y}X) c(N_{X\cap Y}Y)}{c(N_{X\cap Y}Z)}\left(s(N_{X\cap Y}X) \odot s(N_{X\cap Y}Y)\right)\cap [X\cap Y]. $$

\end{proof}
\begin{remark}
The expression for $s(W,Z)_d$ is the following:
$$s(X,Z)_d+s(Y,Z)_d-$$
\footnotesize{$$\sum_{\substack{i+j = dim Z - d \\i,j \geq 0}} \left\{\frac{c(N_{X\cap Y}X)c(N_{X\cap Y}Y)}{c(N_{X\cap Y} Z)}\right\}^i\sum_{\substack{k+l = j-h\\ k,l\geq 0}}\binom{j-h}{k,l}s_{k-(dimY-dimX\cap Y)}(N_{X\cap Y}Y)s_{l-(dimY-dimX\cap Y)}(N_{X\cap Y}X)\cap [X\cap Y],$$}
where $h = dimZ+dimX\cap Y - dimX -dimY$.

\end{remark}
\begin{exmp}
Let $\mathbb{P}^m, \mathbb{P}^n\hookrightarrow \mathbb{P}^N$ , and suppose $\mathbb{P}^n\cap \mathbb{P}^m \simeq \mathbb{P}^l$. Consider the scheme $W$ supported on $\mathbb{P}^n\cup \mathbb{P}^m$ that is reduced. Then one could check by local computation that blowing up $\mathbb{P}^n$,  $\tilde{\mathbb{P}}^m $ is residual to $E$ in $\tilde{W}$. 
Then we obtained that, the coefficient of $H^{N-d}$ in $s(W,\mathbb{P}^n)$ is:
$$(-1)^{m-d}\binom{N-d-1}{m-1}+(-1)^{n-d}\binom{N-d-1}{n-1}+$$
\footnotesize{$$\sum_{\substack{i,j\geq 0\\ i+j = N-d}}(-1)^i\binom{N-m-n+l+i-1}{N-m-n+l-1}\sum_{u+v = j-(N-m-n+l)}(-1)^{j-l}\binom{j-(N-m-n+l)}{u}\binom{u-1}{m-l-1}\binom{v-1}{n-l-1}$$}
\end{exmp}
\section{Appendix: Chern Class of Blowup}

\hspace{8mm} In \cite{Alu09} section 4.4, the author uses the technique of deformation to the normal cone (bundle) to compute the Chern class of the normal bundle to the proper transform. We will recount the argument here and use a variation of the result to express the Chern class of blowup explicitly in terms of the normal bundles before blowing up. 

Considering the following diagram (\textdaggerdbl):
\[
\begin{tikzcd}
X\cap Y \arrow[hookrightarrow]{r} \arrow[hookrightarrow]{d} & Y \arrow[hookrightarrow,"j"]{d}\\
X \arrow[hookrightarrow]{r}& Z \\
\end{tikzcd}
\]
Let $\pi: Bl_{Y}Z \rightarrow Z$ be the blowup, and $E$ is the exceptional divisor. The goal for this section is to produce an explicit formula to compute the Chern class of the normal bundle $N_{\tilde{X}}\tilde{Z}$ in terms of the pullback of the bundle in the original diagram. 

We consider the following simpler case first:
\begin{lem}
Let $X,Y$ be a regularly embedded subscheme of $Z$. $X\cap Y$ regularly embedded in $X$ and $Y$.
Suppose there is a regularly embedded subscheme $A$ of $Z$, where $X$ intersects $Y$ transversely. 
Then,
$$c(N_{\tilde{X}}\tilde{Z}) = c(\pi^*N_{X}A)c\left((\pi^*N_{A}Z\otimes \mathscr{O}(-E))|_{\tilde{X}}\right)$$
\end{lem}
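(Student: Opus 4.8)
The plan is to factor the embedding $\tilde X\hookrightarrow\tilde Z$ through the proper transform of $A$. Since $X$ and $Y$ meet transversely \emph{inside} $A$, both are subschemes of $A$; keep the notation $\pi\colon\tilde Z=Bl_YZ\to Z$ and write $E$ for the exceptional divisor. First I would record the blow-up bookkeeping: because $Y\subseteq A$ with $Y$ regularly embedded and $A$ regularly embedded in $Z$, the proper transform of $A$ in $\tilde Z$ is $\tilde A=Bl_YA$, the proper transform of $X$ is $\tilde X=Bl_{X\cap Y}X$, and $\tilde X\subseteq\tilde A\subseteq\tilde Z$ with all three embeddings regular (this is checked in the affine charts of the blow-up, exactly as in the proof of Lemma \ref{technical}); moreover $\mathscr O(E)|_{\tilde A}$ and $\mathscr O(E)|_{\tilde X}$ are the line bundles of the exceptional divisors of $Bl_YA$ and of $Bl_{X\cap Y}X$ respectively.

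The chain $\tilde X\subseteq\tilde A\subseteq\tilde Z$ of regular embeddings yields a short exact sequence of normal bundles
$$0\longrightarrow N_{\tilde X}\tilde A\longrightarrow N_{\tilde X}\tilde Z\longrightarrow N_{\tilde A}\tilde Z|_{\tilde X}\longrightarrow 0,$$
so by the Whitney formula $c(N_{\tilde X}\tilde Z)=c(N_{\tilde X}\tilde A)\cdot c(N_{\tilde A}\tilde Z|_{\tilde X})$. For the first factor I would apply Lemma \ref{technical}(a) to the transverse pair $X,Y$ inside $A$, with $X\cap Y$ regularly embedded in $X$ and in $Y$: this gives $N_{\tilde X}\tilde A=\pi^{*}N_XA|_{\tilde X}$, hence $c(N_{\tilde X}\tilde A)=c(\pi^{*}N_XA)$. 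It then remains to compute $N_{\tilde A}\tilde Z$.

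For this I would establish the ideal-sheaf identity $\pi^{*}\mathscr I_A=\mathscr I_E\cdot\mathscr I_{\tilde A}$ on $\tilde Z$, i.e. $\mathscr I_{\tilde A}\cong\pi^{*}\mathscr I_A\otimes\mathscr O(E)$. Away from $Y$ the map $\pi$ is an isomorphism and the identity is trivial, so one works at a closed point $p\in Y$; since $Y\subseteq A$ and both embeddings $Y\subseteq A$, $A\subseteq Z$ are regular, one can choose generators so that $\mathscr I_{A,p}=(a_1,\dots,a_r)$ and $\mathscr I_{Y,p}=(a_1,\dots,a_r,b_1,\dots,b_s)$ are regular sequences in $\mathscr O_{Z,p}$. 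Running through the affine charts of $Bl_YZ$ (one for each of the $r+s$ generators serving as the pivot) as in the proof of Lemma \ref{technical}, one checks that $\pi^{*}\mathscr I_A$ equals $\mathscr I_E$ times the ideal of the proper transform in that chart — the charts in which some $a_i$ is the pivot being disjoint from $\tilde A$, where the identity still holds. Dualizing the induced isomorphism $\mathscr I_{\tilde A}/\mathscr I_{\tilde A}^{2}\cong\pi^{*}(\mathscr I_A/\mathscr I_A^{2})\otimes\mathscr O(E)|_{\tilde A}$ of conormal sheaves gives $N_{\tilde A}\tilde Z=\pi^{*}N_AZ\otimes\mathscr O(-E)|_{\tilde A}$. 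Restricting to $\tilde X$ and combining with the previous paragraph produces
$$c(N_{\tilde X}\tilde Z)=c(\pi^{*}N_XA)\cdot c\bigl((\pi^{*}N_AZ\otimes\mathscr O(-E))|_{\tilde X}\bigr),$$
which is the assertion.

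The hard part will be the local identity $\pi^{*}\mathscr I_A=\mathscr I_E\cdot\mathscr I_{\tilde A}$: one has to pin down the ideal of the proper transform $\tilde A$ in each blow-up chart and verify that exactly one factor of $\mathscr I_E$ is produced — morally, because $Y\subseteq A$, the pulled-back ideal $\pi^{*}\mathscr I_A$ already vanishes along a full component of $E$ rather than to higher order, which is the sole source of the twist by $\mathscr O(-E)$. A secondary point requiring care is the verification that $\tilde A$ and $\tilde X$ are in fact regularly embedded in $\tilde Z$ (again read off from the charts), since this is what licenses the Whitney sequence and the conormal duality used above. As an alternative to the ideal-sheaf computation one can derive $N_{\tilde A}\tilde Z=\pi^{*}N_AZ\otimes\mathscr O(-E)|_{\tilde A}$ by specializing Aluffi's deformation-to-the-normal-cone argument from \cite{Alu09} to this configuration — the method used for the general statement \ref{chern class of blowup} — but the direct argument above is more transparent here.
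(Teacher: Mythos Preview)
Your proof is correct and follows essentially the same route as the paper: factor through $\tilde A$, use the Whitney sequence $0\to N_{\tilde X}\tilde A\to N_{\tilde X}\tilde Z\to N_{\tilde A}\tilde Z|_{\tilde X}\to 0$, identify the first factor via Lemma~\ref{technical}(a), and identify the second as $\pi^*N_AZ\otimes\mathscr O(-E)$. The only difference is that the paper obtains $N_{\tilde A}\tilde Z=\pi^*N_AZ\otimes\mathscr O(-E)$ by a direct citation of \cite[B.6.10]{Ful84}, whereas you re-derive it via the ideal-sheaf identity $\pi^*\mathscr I_A=\mathscr I_E\cdot\mathscr I_{\tilde A}$ in blow-up charts; your chart computation is exactly the content of that reference.
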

\begin{proof}
Consider the following diagram:
\[
\begin{tikzcd}
X\cap Y \arrow[hookrightarrow]{r} \arrow[hookrightarrow]{d} & Y \arrow[hookrightarrow]{d}\\
X \arrow[hookrightarrow]{r}& A \arrow[hookrightarrow]{rd} \\
 & & Z
\end{tikzcd}
\]
Consider blowing up $Y$, with $\pi: Bl_YZ\rightarrow Z$  and $E$ as the exceptional divisor, since $X,Y$ are also regularly embedded in $A$, by lemma 2.3(a), we have $N_{\tilde{X}}\tilde{A} = \pi_*N_{X}A.$ 
By \cite{Ful84} Appendix B 6.10, we also have $N_{\tilde{A}}\tilde{Z} = \pi^*N_{A}Z \otimes \mathscr{O}(-E).$
Since we also have the exact sequence:
$$0\rightarrow N_{\tilde{X}}\tilde{A} \rightarrow N_{\tilde{X}}\tilde{Z} \rightarrow N_{\tilde{A}}\tilde{Z}|_{\tilde{X}}\rightarrow 0$$
Then we have $$c(N_{\tilde{X}}\tilde{Z}) = c(\pi^*N_{X}A)c\left((\pi^*N_{A}Z\otimes \mathscr{O}(-E))|_{\tilde{X}}\right)$$
\end{proof}
This formula is true even without a priori knowing the existence of $A$. In fact it is enough to replace $X,Y,Z$ with the projective bundle $P(N_{X\cap Y}X\oplus 1) , P(N_{X\cap Y}Y \oplus 1),P(N_{X\cap Y}Z\oplus 1) .$ In this case, we recover previous situation by considering $P(N_{X\cap Y}X  \oplus N_{X\cap Y}Y \oplus 1) $, and note that $P(N_{X\cap Y}X \oplus 1)$ and $P(N_{X\cap Y}X \oplus 1) $ intersect transversely in $P(N_{X\cap Y}X  \oplus N_{X\cap Y}Y\oplus 1) \subseteq P(N_{X\cap Y}Z \oplus 1)$. We have the following diagram (\textdaggerdbl'):
\[
\begin{tikzcd}
    X\cap Y \arrow[r,"i"]\arrow[d,"j"]& X' = P(N_{X\cap Y}X \oplus 1)\arrow[d]\\
   Y' =  P(N_{X\cap Y} Y \oplus 1) \arrow[r]& A' = P(N_{X\cap Y}X\oplus N_{X\cap Y}Y \oplus 1)\arrow[dr]\\
     & &  Z' = P(N_{X\cap Y}Z \oplus 1)
\end{tikzcd}
\]
Let $\pi':Bl_{Y'}Z'\rightarrow Z'$ denotes the blowup of $Z'$ at $Y'$, and let $E'$ denote the exceptional divisor.
Then the previous Lemma tells us:
$$c(N_{\tilde{X}'}\tilde{Z}') = c(\pi'^*N_{X'}A')c((\pi'^*N_{A'}Z'\otimes \mathscr{O}(-E'))|_{\tilde{X}'})$$
Then we define the operator $\Gamma'$ in the following way:
Note that 
$$c((\pi'^*N_{A'}Z'\otimes \mathscr{O}(-E'))|_{\tilde{X}'}) = c(\pi'^*N_{A'}Z')+ (\zeta(E',\pi'^*c_i(N_{A'}Z'))\cdot E')|_{\tilde{X}'},$$
where $\zeta$ is some formal polynomial with variables $E'$ and $\pi'^*c_i(N_{A'}Z')$. There is a factor of $E'$ in the second summand. Therefore it is supported on $E'\cap \tilde{X}$, and we can drop the restriction.
\begin{defn}
Define $\Gamma' := c(\pi'^*N_{X'}A')\zeta(E',\pi'^*c_i(N_{A'}Z'))\cdot E'.$ Moreover, $\Gamma'$ has the property:
$$\Gamma'= c(N_{\tilde{X}'}\tilde{Z}') - c(\pi'^*N_{X'}Z') $$
\end{defn}
\vspace{5mm}
Similarly, we can also define $\Gamma :=c(N_{\tilde{X}}\tilde{Z}) - c(\pi^* N_{X}Z)$ for the standard case given by the diagram (\textdaggerdbl).
Note that we have an explicit expression for $\Gamma'$ but not for $\Gamma$. We will use the technique of deformation to the normal cone to establish equality.
Deformation to the normal cone is introduced in \cite{Ful84} chapter 5. It is shown that there exists a scheme $M_Z: = Bl_{X\cap Y\times \{\infty\}}Z \times \mathbb{P}^1$, such that consider the projection $\sigma: M_Z \rightarrow \mathbb{P}^1$, then $\sigma^{-1}(\infty) =  P(N_{X \cap Y}Z\oplus 1) \cup Bl_{X\cap Y} Z$ set-theoretically, $\sigma^{-1}(0) = Z$.
\begin{prop}\label{chern class of blowup}
Consider the situation in diagram (\textdaggerdbl), and with the above definition, 
\begin{enumerate}[label=(\alph*)]
\item$\tilde{X}'\cap E' = \tilde{X}\cap E$ 
\item  $\Gamma' = \Gamma$
\end{enumerate}
\end{prop}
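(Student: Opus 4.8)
The plan is to reprove the statement by the deformation to the normal cone technique of \cite{Alu09}: I will exhibit a single family over $\mathbb{P}^1$ whose fibre over $0$ reproduces the situation of diagram (\textdaggerdbl) — and hence the class $\Gamma$ — and whose fibre over $\infty$, on the relevant component, reproduces the situation of diagram (\textdaggerdbl') — and hence the class $\Gamma'$ — and then argue that $\Gamma$ extends over the family to a class that is constant in the parameter. I would first settle (a), which is also what makes the equality in (b) meaningful. By the universal property of blowing up (as already used in the proof of Theorem \ref{theorem1}), $\tilde X\cap E$ is the exceptional divisor of $Bl_{X\cap Y}X$, hence equal to $P(N_{X\cap Y}X)$ because $X\cap Y\hookrightarrow X$ is a regular embedding; on the primed side $X'\cap Y'$ is the common zero section $X\cap Y$ of $A'$, since $(N_{X\cap Y}X\oplus 1)\cap(N_{X\cap Y}Y\oplus 1)=\mathscr{O}$ as sub-bundles of $N_{X\cap Y}X\oplus N_{X\cap Y}Y\oplus 1$, so $\tilde X'\cap E'$ is the exceptional divisor of the blowup of the zero section of $X'=P(N_{X\cap Y}X\oplus 1)$, which is again $P(N_{X\cap Y}X)$ since the normal bundle to that zero section is $N_{X\cap Y}X$. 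These identifications being canonical, I henceforth regard both $\Gamma$ and $\Gamma'$ as classes supported on the single space $P(N_{X\cap Y}X)$.

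For (b) I would form the deformation to the normal cone of $X\cap Y$ in $Z$, that is $M_Z$ with $\sigma\colon M_Z\to\mathbb{P}^1$ as recalled above ($\sigma^{-1}(\mathbb{A}^1)=Z\times\mathbb{A}^1$ and $\sigma^{-1}(\infty)=Z'\cup Bl_{X\cap Y}Z$ with $Z'=P(N_{X\cap Y}Z\oplus 1)$), together with the subfamilies $M_X,M_Y,M_{X\cap Y}=(X\cap Y)\times\mathbb{P}^1$ obtained as the closures of $X\times\mathbb{A}^1$, $Y\times\mathbb{A}^1$, $(X\cap Y)\times\mathbb{A}^1$. Applying Lemma \ref{technical}(a) with ambient $Z\times\mathbb{P}^1$, sub-scheme $X\times\mathbb{P}^1$ (resp. $Y\times\mathbb{P}^1$) and center $(X\cap Y)\times\{\infty\}$ — all regular embeddings, using the hypotheses that $X,Y\hookrightarrow Z$ and $X\cap Y\hookrightarrow X,Y$ are regular — shows that $M_X,M_Y$ are regularly embedded in $M_Z$ with $N_{M_X}M_Z=q^*N_XZ$, $N_{M_Y}M_Z=q^*N_YZ$, and likewise $N_{M_{X\cap Y}}M_X=p^*N_{X\cap Y}X$ for the projections $q,p$ onto $X,X\cap Y$. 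Now blow up $M_Y$ in $M_Z$; let $\pi_M\colon Bl_{M_Y}M_Z\to M_Z$ with exceptional divisor $E_M$, let $\tilde M_X$ be the proper transform of $M_X$, and set $\Gamma_M:=c(N_{\tilde M_X}Bl_{M_Y}M_Z)-c(\pi_M^*N_{M_X}M_Z)$. By Lemma \ref{technical} applied in the family the blowup is an isomorphism away from $M_{X\cap Y}$, so $\Gamma_M$ is supported on $\tilde M_X\cap E_M$, and by the universal property of blowups $\tilde M_X\cap E_M$ is the exceptional divisor of $Bl_{M_{X\cap Y}}M_X$, hence equals $P(p^*N_{X\cap Y}X)=P(N_{X\cap Y}X)\times\mathbb{P}^1$.

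It then remains to identify the two fibres and conclude. Over $t=0$ the family is literally the situation of (\textdaggerdbl), so $Bl_{M_Y}M_Z$, $\tilde M_X$, $E_M$, $\pi_M$, $N_{M_X}M_Z$ restrict to $\tilde Z=Bl_YZ$, $\tilde X$, $E$, $\pi$, $N_XZ$, whence the restriction of $\Gamma_M$ to the fibre over $0$ is $\Gamma$. Over $t=\infty$, restricted to the component $Z'$, the subfamilies $M_X,M_Y$ restrict to $X'=P(N_{X\cap Y}X\oplus 1)$ and $Y'=P(N_{X\cap Y}Y\oplus 1)$; since $\tilde X'\cap E'$ (by part (a), the exceptional divisor of blowing up the zero section of $X'$) lies over the finite part of $Z'$, away from the intersection $Z'\cap Bl_{X\cap Y}Z$, the family is fibrewise well behaved along the support of $\Gamma_M$, so $Bl_{M_Y}M_Z$, $\tilde M_X$, $E_M$, $N_{M_X}M_Z$ restrict there to $Bl_{Y'}Z'$, $\tilde X'$, $E'$, $N_{X'}Z'$, and the restriction of $\Gamma_M$ to the fibre over $\infty$ is $\Gamma'$. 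Finally, $\tilde M_X\cap E_M=P(N_{X\cap Y}X)\times\mathbb{P}^1$ is the trivial bundle $P(\mathscr{O}^{\oplus2})$ over $P(N_{X\cap Y}X)$, whose hyperplane class restricts to $0$ on every fibre; hence by the projective bundle formula the Gysin restriction $A_*\bigl(P(N_{X\cap Y}X)\times\mathbb{P}^1\bigr)\to A_*\bigl(P(N_{X\cap Y}X)\bigr)$ to a fibre is the projection onto the pulled-back summand, independent of the fibre. Applying this to $\Gamma_M$ gives $\Gamma=\Gamma'$.

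The step I expect to be the main obstacle is the identification over $t=\infty$: that $Bl_{M_Y}M_Z$ restricts on the component $Z'$ to $Bl_{Y'}Z'$, with $\tilde M_X$ restricting to $\tilde X'$ and the normal bundle $N_{M_X}M_Z=q^*N_XZ$ restricting, \emph{along the support of} $\Gamma_M$, to $N_{X'}Z'$ — in other words, that blowing up $Y$ commutes with degenerating $X\cap Y$ to its normal cone and that the spurious component $Bl_{X\cap Y}Z$ of the special fibre does not interact with the locus $\tilde M_X\cap E_M$. I would attack this with the same explicit computation with regular sequences as in the proof of Lemma \ref{technical}, now performed inside the local model of $M_Z$ near a point of $X\cap Y\times\{\infty\}$, together with the flatness of $M_Y\to\mathbb{P}^1$; the same local model also furnishes the regular-embedding and normal-bundle statements for $M_X,M_Y,M_{X\cap Y}$ used above, and — via Lemma \ref{technical}(b) applied in the family — the fact that the support of $\Gamma_M$ sits in the finite part of $Z'$.
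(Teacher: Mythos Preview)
Your approach is essentially the paper's: both form the deformation spaces $M_X,M_Y,M_Z$ of $X\cap Y$, blow up $M_Y$ inside $M_Z$, set up the family operator $\Gamma_M=c(N_{\tilde M_X}\tilde M_Z)-c(\pi_M^*N_{M_X}M_Z)$, and compare its restrictions to the fibres over $0$ and $\infty$; part (a) is handled identically via the exceptional-divisor identification $P(N_{X\cap Y}X)$.

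The one place where the arguments genuinely diverge is the final comparison step, and there your phrasing has a small gap. You write ``apply Gysin restriction $A_*(P(N_{X\cap Y}X)\times\mathbb{P}^1)\to A_*(P(N_{X\cap Y}X))$ to $\Gamma_M$'', but $\Gamma_M$ is a Chern-class operator, not a cycle class, so Gysin restriction does not literally act on it; fibre-independence of Gysin on \emph{classes} does not by itself force equality of \emph{operators}. What is needed is either that the bundles involved are pulled back along $P(N_{X\cap Y}X)\times\mathbb{P}^1\to P(N_{X\cap Y}X)$ (so $\Gamma_M$ is a pulled-back operator and its restriction to every fibre is the same), or a cycle-by-cycle test. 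The paper does the latter: for arbitrary $V\subseteq\tilde X$ it computes $\Gamma\cap[V]$ by projection from $[V\times\{0\}]\subseteq\tilde M_X$, passes to the fibre at $\infty$ via the rational equivalence $[V\times\{0\}]\sim[P(N_{\widetilde{X\cap Y}\cap V}V\oplus1)]+[Bl_{\widetilde{X\cap Y}\cap V}V]$, discards the $Bl$ term (disjoint from the centre), and then closes with the key observation that $\tilde M_Z$ is \emph{simultaneously} the deformation of $\tilde Z$ to the normal cone along $\widetilde{X\cap Y}=\tilde X\cap E$, which yields the global bundle identifications $N_{\tilde M_X}\tilde M_Z=p^*N_{\tilde X'}\tilde Z'$ and $\bar\pi^*N_{M_X}M_Z=p^*\pi'^*N_{X'}Z'$ over the support. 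This double reading of $\tilde M_Z$ is precisely the ``main obstacle'' you flagged, and is a cleaner way to resolve it than redoing the regular-sequence computation of Lemma~\ref{technical} in the local model of $M_Z$.
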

\vspace{5mm}
This proposition extends Theorem 4.2 in \cite{Alu09} which is proven using the technique of deformation to the normal cone in \cite{Alu09} section 4.6. We will recount the argument here and explicitly determine the bundles $\mathbb{N}$ and $\mathbb{C}$ mentioned in the paper:
\begin{proof}
Note that $\Gamma$ is supported on $\tilde{X}\cap E$, and $\Gamma'$ is supported on $\tilde{X}'\cap E'$, so (2) only makes sense if (1) holds. We will prove (1) first.
\newline Remember that the intersection $X'\cap Y'$ is with respect to the canonical embedding $N_{X\cap Y}X,N_{X\cap Y}Y  \hookrightarrow N_{X\cap Y}X\oplus N_{X\cap Y}Y $. Then the intersection would simply be the base of the normal bundles, namely $X'\cap Y' = X\cap Y$.

Now we claim, $\tilde{X}\cap E =  \tilde{X}' \cap E'$ 
Since $\tilde{X}'\cap E'$ is the exceptional divisor of the blowup $Bl_{X'\cap Y'}X'$, $\tilde{X}'\cap E'= P(N_{X' \cap Y'}X')= P(N_{X\cap Y}X).$ Note that $P(N_{X\cap Y}X)$ is also the exceptional divisor of $Bl_{X\cap Y}X$, then $\tilde{X}'\cap E' = \tilde{X}\cap E.$

To prove (2), we will use the technique of deformation to the normal cone. 
Let $M_X, M_Y$ and $M_Y$ be the deformation to the normal cones to $X\cap Y$. Then consider the blowup: $\rho:\tilde{M}_Z := Bl_{M_Y}M_Z\rightarrow M_Z$, this induces $\bar{\pi}: \tilde{M}_X\rightarrow M_X.$ Note that $M_X\cap M_Y = (X\cap Y)\times \mathbb{P}^1$. Therefore, $\tilde{M}_X = Bl_{(X\cap Y)\times\mathbb{P}^1} M_X$. We will use $\bar{\pi}$ to connect the two projection maps of blowup $\pi:Bl_{Y}Z\rightarrow Z$, $\pi': Bl_{Y'}Z'\rightarrow Z'$ that we have introduced before.
Consider the diagram for $\tilde{M}_X$:
\[
\begin{tikzcd}
 &&\tilde{M}_Z:= Bl_{X\cap Y \times \mathbb{P}^1}M_Z\arrow[ld,"\bar{\pi}"]\arrow[rd, "\nu"]\arrow[drr, bend left ,"p"]\\&M_Z\arrow[rd] & &\tilde{Z}\times \mathbb{P}^1\arrow[ld]\arrow[r]&\tilde{Z}\\
& &X\times\mathbb{P}^1& &
\end{tikzcd}
\]
where $z$ is the inclusion map into the fiber of $\tilde{M}_Z$ at $0$, $\bar{\pi}$ is the projection map of blowup $M_Z$ at the isomorphic copy $(X\cap Y)\times \mathbb{P}^1$,  and $\nu$ is the projection map of blowing up $\tilde{Z}\times \mathbb{P}^1$ at $\widetilde{X\cap Y}\times\{\infty\}$. Note that $\tilde{M}_Z$ is the deformation to the normal cone (bundle) $N_{\widetilde{X\cap Y}}\tilde{Z}$. By unviersal propety of blowup $\tilde{M}_Z$ is well-defined and makes the diagram commutes. Furthermore note that $\bar{\pi}$ restricts to the fiber over any points on $\mathbb{P}^1$ that is not $\infty$ is $\pi$, and the restriction at $\infty$ is $\pi'$. The map $p$ is defined to be the composition of $\tilde{M}_Z\rightarrow \tilde{Z}\times \mathbb{P}^1\rightarrow \tilde{Z}.$

Let $V$ be any subvariety of $\tilde{X}$, then it is enough to prove that $\Gamma\cap [V] = \Gamma'\cap [V].$
By projection formula, $$(c(N_{\tilde{X}}\tilde{Z}) - c(\pi^*N_{X}Z))\cap [V] = (c(N_{\tilde{M}_X}\tilde{M}_Z)- c(\bar{\pi}^*N_{M_X}M_Z))\cap[V \times \{0\}] $$
By rational equivalence, $[V \times \{0\}] = [P(N_{\widetilde{X\cap Y}\cap V}V\oplus 1)] + [Bl_{\widetilde{X\cap Y}\cap V}V].$
Then,
 $$(c(N_{\tilde{X}}\tilde{Z}) - c(\pi^*N_{X}Z))\cap [V] = $$
 $$p_*\left((c(N_{\tilde{M}_X}\tilde{M}_Z)- c(\bar{\pi}^*N_{M_X}M_Z))\cap([P(N_{\widetilde{X\cap Y}\cap V}V\oplus 1)] + [Bl_{\widetilde{X\cap Y}\cap V}V])\right) $$
But $ Bl_{\widetilde{X\cap Y}\cap V} V $ is disjoint from the center of the blowup $\bar{\pi}$ .  Therefore,
 $$(c(N_{\tilde{M}_X}\tilde{M}_Z)- c({\pi}^*N_{M_X}M_Z))\cap [Bl_{\widetilde{X\cap Y}\cap V}V] = 0$$
and,
 $$ (c(N_{\tilde{X}}\tilde{Z}) - c(\pi^*N_{X}Z))\cap [V] = $$
\begin{align}
p_*\left((c(N_{\tilde{M}_X}\tilde{M}_Z)- c(\bar{\pi}^*N_{M_X}M_Z))\cap[P(N_{\widetilde{X\cap Y}\cap V}V\oplus 1)]\right)
\end{align}
Now we wish to express the fiber of the normal bundle at infinity in terms of normal bundle as in diagram (\textdaggerdbl'). Note that by the restriction condition, it is enough to consider the $N_{\tilde{X}\cap E}\tilde{X}$ for $\tilde{M}_X$, and $\tilde{Z}'$ for $\tilde{M}_Z.$
This is because over the fiber at $\infty$, $\tilde{X}\cap E \subseteq N_{\tilde{X}\cap E} \tilde{X}$, and because $X\cap Y = X'\cap Y',$
then $\tilde{X}' = Bl_{X'\cap Y'} X' = Bl_{X\cap Y}N_{X\cap Y}X= N_{\tilde{X}\cap E} \tilde{X} $.
\newline

Therefore, we obtain the following:
$$N_{\tilde{M}_X}\tilde{M}_Z = p^*(N_{\tilde{X}'}\tilde{Z}' )$$
 and similarly,
 $$\bar{\pi}^*N_{M_X}M_Z  = p^*\pi'^*(N_{X'}Z')$$
Now we have reduced the situation to the case of diagram (\textdaggerdbl'), and we have $c(N_{\tilde{X}'}\tilde{Z}') - c(\pi'^*N_{X'}Z') = \Gamma',$ .and therefore Hence, $c(N_{\tilde{M}_X}\tilde{M}_Z)- c(\bar{\pi}^*N_{M_X}M_Z) = p^*\Gamma'$. Then, 
$$(c(N_{\tilde{X}}\tilde{Z}) - c(\pi^*N_{X}Z))\cap [V] =p_*\left(p^*\Gamma'\cap[P(N_{\widetilde{X\cap Y}\cap V}V\oplus 1)]\right)$$

Again we have $p^*\Gamma \cap [Bl_{\widetilde{X\cap Y}\cap V}V] = 0$ by the disjointness of center of blowup. Then one fill in the missing class and transfer the class back to fiber at $0$ through rational equivalence, namely,
$$\Gamma\cap [V] = p_*(p^*\Gamma'\cap ([P(N_{\widetilde{X\cap Y}\cap V}V\oplus 1)]+[Bl_{\widetilde{X\cap Y}\cap V}V])) = p_*(p^*\Gamma'\cap [V\times \{0\}]).$$
Then by projection formula $\Gamma\cap [V] = \Gamma'\cap [V].$

\end{proof}

\bibliographystyle{alpha}

\bibliography{references.bib}





\end{document}